\newtheorem{thm}{Theorem}[section]
\newtheorem{lemma}[thm]{Lemma}
\newtheorem{prop}[thm]{Proposition}
\newtheorem{remark}[thm]{Remark}
\numberwithin{equation}{section}
\newcommand{\authorfootnotes}{\renewcommand\thefootnote{\@fnsymbol\c@footnote}}%
\begin{document}

\def\q{\mathfrak{q}}
\def\p{\mathfrak{p}}
\def\l{\mathfrak{l}}
\def\u{\mathfrak{u}}
\def\a{\mathfrak{a}}
\def\b{\mathfrak{b}}
\def\m{\mathfrak{m}}
\def\n{\mathfrak{n}}
\def\c{\mathfrak{c}}
\def\d{\mathfrak{d}}
\def\e{\mathfrak{e}}
\def\k{\mathfrak{k}}
\def\z{\mathfrak{z}}
\def\h{{\mathfrak h}}
\def\gl{\mathfrak{gl}}
\def\sl{\mathfrak{sl}}
\def\bk{k}
\def\bl{l}
\def\Ext{{\rm Ext}}
\def\Hom{{\rm Hom}}
\def\Ind{{\rm Ind}}

\def\res{\mathop{Res}}

\def\GL{{\rm GL}}
\def\SL{{\rm SL}}
\def\SO{{\rm SO}}
\def\O{{\rm O}}

\def\R{\mathbb{R}}
\def\C{\mathbb{C}}
\def\Z{\mathbb{Z}}
\def\Q{\mathbb{Q}}
\def\A{\mathbb{A}}
\def\bs{\boldsymbol}

\def\w{\wedge}

\def\Cat{\mathcal{C}}
\def\HC{{\rm HC}}
\def\HCat{\Cat^\HC}
\def\proj{{\rm proj}}

\def\to{\rightarrow}
\def\To{\longrightarrow}

\def\1{1\!\!1}
\def\dim{{\rm dim}}

\def\th{^{\rm th}}
\def\isom{\approx}

\def\CE{\mathcal{C}\mathcal{E}}
\def\E{\mathcal{E}}

\def\dis{\displaystyle}
\def\f{{\bf f}}                 
\def\g{{\bf g}}
\def\T{{\rm T}}              
\def\omegatil{\tilde{\omega}}  
\def\H{\mathcal{H}}            
\def\Dif{\mathfrak{D}}      
\def\W{W^{\circ}}           
\def\Whit{\mathcal{W}}      
\def\ringO{\mathcal{O}}     
\def\S{\mathcal{S}}      
\def\M{\mathcal{M}}      
\def\K{{\rm K}}          
\def\h{\mathfrak{h}} 
\def\N{\mathfrak{N}}    
\def\norm{{\rm N}}       
\def\trace{{\rm Tr}} 
\def\ctilde{\tilde{C}}

\author{Alia Hamieh}
\address[Alia Hamieh]{University of Lethbridge, Department of Mathematics and Computer Science, C526 University Hall, 4401 University Drive, Lethbridge, AB T1K3M4, Canada}
\email{alia.hamieh@uleth.ca}

\author{Naomi Tanabe}
\address[Naomi Tanabe]{Dartmouth College, Department of Mathematics, 6188 Kemeny Hall, Hanover, NH 03755-3551, USA}
\email{naomi.tanabe@dartmouth.edu}
\thanks{Research of both authors was partially supported by Coleman Postdoctoral Fellowships at Queen's University}
\thanks{Research of first author is currently supported by PIMS Postdoctoral Fellowship at the University of Lethbridge}
\keywords{ Hilbert modular forms, Rankin-Selberg convolutions, Special values of $L$-functions}
\subjclass[2010]{Primary 11F41, 11F67; Secondary 11F30, 11F11, 11F12, 11N75}

\title[Determining Hilbert Modular Forms: Weight Aspect]{Determining Hilbert Modular Forms by Central Values of Rankin-Selberg Convolutions: The Weight Aspect}

\date{\today}

\begin{abstract}
The purpose of this paper is to prove that a primitive Hilbert cusp form $\g$ is uniquely determined by the central values of the Rankin-Selberg $L$-functions $L(\f\otimes\g, \frac{1}{2})$, where $\f$ runs through all primitive Hilbert cusp forms of weight $k$ for infinitely many weight vectors $k$. This result is a generalization of the work of Ganguly, Hoffstein, and Sengupta \cite{ganguly-hoffstein-sengupta} to the setting of totally real number fields, and it is a weight aspect analogue of the authors own work \cite{2016TRANS}.
 \end{abstract}

\maketitle

\setcounter{section}{-1}
\section{Introduction}
Over the past five decades, a significant part of the research in analytic and algebraic number theory has been centered around the special values of $L$-functions. This will undoubtedly continue in view of the bountiful number of conjectures that this topic encompasses. Indeed, the special values of $L$-functions encode profound information about the underlying algebraic or geometric objects. For example, the vanishing or non-vanishing of $L$-functions and their twists at the center of the critical strip have rich arithmetic implications, as it is most famously portrayed by the Birch and Swinnerton-Dyer Conjecture and its far-reaching generalizations.  Another intriguing problem is to study the extent to which the special values of $L$-functions associated to abelian varieties or automorphic forms, for example, determine these structures. In 1996, Stark used transcendence theory to prove that the central value of $L(E,s)$, when non-vanishing, determines the isogeny class of the (modular) elliptic curve $E/\mathbb{Q}$. However, the elegant transcendental argument of Stark \cite{stark} does not seem to generalize in any obvious way to modular forms of higher weights. In an interesting paper \cite{luo-ramakrishnan}, Luo and Ramakrishnan achieved a breakthrough in this direction by showing that a primitive modular form $f$ is uniquely determined by the central values $L(f\otimes\chi,\frac12)$ as $\chi$ varies over a carefully chosen set of infinitely many quadratic Dirichlet characters. The method used in \cite{luo-ramakrishnan} sets a solid analytic framework which was subsequently used in several papers on the topic of determining modular forms by twists of central $L$-values such as Ganguly-Hoffstein-Sengupta \cite{ganguly-hoffstein-sengupta}, Luo \cite{luo2}, Pi \cite{Pi}, and Zhang \cite{zhang}. 

In a recent paper \cite{2016TRANS}, the authors generalized the result of Luo \cite{luo2} to the setting of totally real number fields by proving that a primitive Hilbert cusp form $\g$ is uniquely determined by the central values of the Rankin-Selberg $L$-functions $L(\f\otimes\g, \frac{1}{2})$, where $\f$ varies over all primitive Hilbert cusp forms of level $\q$ for infinitely many prime ideals $\q$. In the present paper, the authors employ the method of \cite{ganguly-hoffstein-sengupta} and some of the tricks used in \cite{2016TRANS} to establish a weight aspect analogue of their result \cite[Theorem~0.1]{2016TRANS}. This extends the main theorem in \cite{ganguly-hoffstein-sengupta} to the setting of totally real number fields. Indeed, it is the purpose of this paper to prove the following theorem. The reader is referred to Section~\ref{sec:background} for notation and terminology.

\begin{thm}\label{thm:main}
Let $\g\in S_l^{\mathrm{new}}(\n)$ and $\g'\in S_{l'}^{\mathrm{new}}(\n')$ be normalized Hecke eigenforms, where the weights $l$ and $l'$ are in $2\mathbb{N}^n$. If
\[ L\left(\f\otimes\g, \frac12\right)=L\left(\f\otimes\g', \frac12\right)\]
for all normalized Hecke eigenforms $\f\in S_k(\ringO_F)$ for infinitely many $k\in 2\mathbb{N}^n$, then $\g=\g'$.
\end{thm}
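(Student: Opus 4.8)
The plan is to recover all the Hecke eigenvalues of $\g$ from the given central values and then invoke strong multiplicity one. Concretely, I would reduce Theorem~\ref{thm:main} to the assertion that $\lambda_\g(\m)=\lambda_{\g'}(\m)$ for every integral ideal $\m$, where $\lambda_\g(\m)$ denotes the normalized Hecke eigenvalue (equivalently, normalized Fourier coefficient) of $\g$ as set up in Section~\ref{sec:background}. Once this is known, the forms $\g$ and $\g'$ generate cuspidal automorphic representations of $\GL_2$ over $F$ whose finite components agree at all finite places, so by strong multiplicity one the two representations are isomorphic; in particular their archimedean components coincide, forcing $l=l'$ and $\n=\n'$, whence $\g=\g'$. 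Thus the weight equality is a \emph{consequence} of the argument rather than something to be established separately.

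The main device is a harmonically weighted, Hecke-twisted first moment. For each weight $k\in 2\mathbb{N}^n$ in the given infinite set and each integral ideal $\m$, set
\[ M_\g(\m):=\sum_{\f\in S_k(\ringO_F)}\omega_\f\,\lambda_\f(\m)\,L\!\left(\f\otimes\g,\tfrac12\right), \]
where $\omega_\f$ are the Petersson (harmonic) weights attached to weight-$k$ level-one forms. The hypothesis $L(\f\otimes\g,\frac12)=L(\f\otimes\g',\frac12)$ for every such $\f$ yields the \emph{exact} identity $M_\g(\m)=M_{\g'}(\m)$ for all $\m$ and all admissible $k$, so the strategy is to evaluate $M_\g(\m)$ asymptotically and read off $\lambda_\g(\m)$.

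To evaluate $M_\g(\m)$ I would insert an approximate functional equation for $L(\f\otimes\g,\frac12)$, writing it as a rapidly convergent sum of Rankin--Selberg coefficients---essentially the convolution $\sum_{\mathfrak{b}^2\mid\mathfrak{a}}\lambda_\f(\mathfrak{a}/\mathfrak{b}^2)\lambda_\g(\mathfrak{a}/\mathfrak{b}^2)$ arising from the factor $\zeta_F(2s)$---weighted by a smooth cutoff $W$ whose transition range is governed by the analytic conductor $Q=Q(k,l)$, and then interchange summation and apply the Petersson trace formula for Hilbert forms of weight $k$. The diagonal term produces the main contribution
\[ M_\g(\m)=\frac{\lambda_\g(\m)}{\norm(\m)^{1/2}}\,N_k^{(l)}+E_k^\g(\m), \]
where $N_k^{(l)}$ is a normalizing factor depending only on $k$ and on the weight $l$ of $\g$ (through $Q$ and the cutoff), while $E_k^\g(\m)$ collects the off-diagonal contributions. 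These off-diagonal terms are built from Kloosterman sums together with products over the archimedean places of Bessel functions $J_{k_v-1}$; since the Bessel factors decay as $\min_v k_v\to\infty$, the expectation is that, together with the tail of the approximate functional equation, they satisfy $E_k^\g(\m)=o\!\left(N_k^{(l)}\right)$ uniformly for $\m$ in any fixed range.

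Finally, taking $\m=\ringO_F$, where $\lambda_\g(\ringO_F)=\lambda_{\g'}(\ringO_F)=1$, the identity $M_\g(\ringO_F)=M_{\g'}(\ringO_F)$ forces $N_k^{(l)}/N_k^{(l')}\to 1$ as $k\to\infty$ along the admissible weights; this absorbs the a priori difference of weights between $\g$ and $\g'$. For a general $\m$, dividing $M_\g(\m)=M_{\g'}(\m)$ by $N_k^{(l)}$, using the normalization matching together with $E_k/N_k\to 0$, and letting $k\to\infty$ then yields $\lambda_\g(\m)=\lambda_{\g'}(\m)$, completing the reduction. The hard part will be the error analysis of $E_k^\g(\m)$: one must prove that the multi-dimensional Kloosterman--Bessel sums arising from the weight-aspect Petersson formula over the totally real field $F$, and the dual sum of the approximate functional equation, are genuinely of smaller order than $N_k^{(l)}$, with enough uniformity in $\m$. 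This demands quantitative decay estimates for products of $J$-Bessel functions in the weight and a careful treatment of the conductor-dependence of $W$, which is precisely where the method of \cite{ganguly-hoffstein-sengupta} together with the totally-real-field techniques of \cite{2016TRANS} must be combined.
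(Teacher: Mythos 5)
Your overall architecture matches the paper's: a harmonically weighted first moment $\sum_{\f}\omega_\f C_\f(\p)L(\f\otimes\g,\frac12)$, an approximate functional equation, the Petersson trace formula for Hilbert forms, a diagonal main term proportional to $C_\g(\p)/\norm(\p)^{1/2}$, normalization via $\p=\ringO_F$, and strong multiplicity one at the end. (The paper twists only by $\p$ prime or $\p=\ringO_F$, which suffices for multiplicity one and keeps the diagonal clean; your use of arbitrary $\m$ would force you through Hecke multiplicativity relations for no gain, but that is cosmetic.)

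The genuine gap is in your treatment of the off-diagonal term, which is the actual content of the paper. You assert that ``since the Bessel factors decay as $\min_v k_v\to\infty$, the expectation is that'' the Kloosterman--Bessel sums are of smaller order than the main term. This is precisely the heuristic that fails here: the main term grows only like $\log(k)=\sum_j\log k_j$, and the standard route (Stirling for the gamma factors of the approximate functional equation, Weil-type bounds for the Kloosterman sums, and uniform decay of $J_{k-1}$) does not beat $\log k$, because the index of the Bessel function and the gamma factors both involve $k$ and their contributions do not cancel favorably. The paper's Section 4 instead uses Goldfeld's trick: open the Kloosterman sum and the Mellin--Barnes representation of $J_{\bk-1}$, recognize the resulting $\nu$-sum as the Fourier expansion of $g_\a$ evaluated at $x/c+iy/c$, apply the modular transformation $g_\a^{x,c}(i/(yc)-\overline{x}/c)=(iy)^{\bl}g_\a(iy/c+x/c)$ (a functional-equation step for the additively twisted coefficient sum), and only then realign and estimate. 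On top of this, the totally real setting forces a sum over totally positive units $\eta$ coming from unfolding $(\a^{-1})^+/\ringO_F^{\times+}$, which must be controlled by shifting the $s$-contours to $\Re(s_j)=6+\lambda_j$ with $\lambda_j$ chosen according to whether $|\eta_j|>1$, using Luo's convergence estimate $\sum_{\eta}\prod_{|\eta_j|>1}|\eta_j|^{-\lambda_0}<\infty$. None of this is present or even gestured at in your plan beyond a citation, so as written the key inequality $E_{\g,\p}(k)=o(\log k)$ is unsupported, and the one justification you do offer is the one that does not work.
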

It should be noted that, when saying infinitely many $k=(k_1,\dots, k_n)$, we require $k_j$ to be sufficiently large for all $j$. The proof of this theorem is given in Section~\ref{sec:proof}. It follows as an application of an asymptotic formula for a weighted sum of the central values $L(\f\otimes\g,\frac12)$, where $\g$ is fixed and $\f$ varies as prescribed above. This formula, which is stated in Section~\ref{sec:moment}, allows for expressing the Fourier coefficient of $\g$ at a prime ideal $\p$ in terms of the central values $L(\f\otimes\g,\frac12)$ up to a negligible error term. Then, we see that two forms $\g$ and $\g'$ satisfying the hypotheses of the theorem above will ultimately have the same Fourier coefficients for all but finitely many prime ideals $\p$, in which case the strong multiplicity one theorem guarantees that the forms $\g$ and $\g'$ are necessarily equal.

The strategy of the proof is essentially analogous to that used in the previously mentioned papers where theorems similar to Theorem~\ref{thm:main} were established. However, in the current paper, we confront a number of delicate issues which are primarily imposed by the technical nature of ad\`elic Hilbert modular forms and the infinitude of the group of units in a totally real number field. In fact, these difficulties are most vividly portrayed in the treatment of the error term in equation (\ref{eqn:partial-error}). 
Dealing with the error term in the weight aspect case requires a more careful examination than the level aspect case considered in \cite{2016TRANS} and \cite{luo2}. One cannot achieve the desired estimate for the error term by the standard application of the Stirling formula and bounds for the Kloosterman sums and the $J$-Bessel functions. As explained in \cite{ganguly-hoffstein-sengupta}, this is due to the appearance of the parameter $k$ in the index of the $J$-Bessel function in addition to the gamma factors originating from the functional equation of the Rankin-Selberg $L$-function. In order to overcome this problem, one resorts to a trick (attributed in \cite{ganguly-hoffstein-sengupta} to Goldfeld \cite{goldfeld}) which amounts to opening up the Kloosterman sums and the $J$-Bessel functions, and then extracting from the expression an additive twist of a certain $L$-function to which a functional equation is applied. This allows for a convenient realignment of the sums in a way that makes it possible to adequately estimate the error term. In the setting of totally real number fields, this process is rendered even more complicated by the existence of a sum over totally positive units arising from the application of a Petersson-type trace formula for Hilbert modular forms. The reader is referred to Section~\ref{sec:error} for the detailed exposition. 

\medskip

The organization of this paper is as follows. In Section 1, we fix some notation pertaining to the totally real number field $F$ over which our work is based. We also recall some background material about (ad\`elic) Hilbert cusp forms and briefly explain how they correspond to the classical ones. We end this section with the definition and some basic properties of the Rankin-Selberg convolution of a pair of Hilbert cusp forms under certain hypotheses. In Section 2, we give a proof of the main theorem. The key ingredient is to study the twisted first moment given in (\ref{eqn:twisted-first-moment}). The application of an approximate functional equation and a Petersson trace formula allows us to split this sum into diagonal terms $M_{\g,\p}(\bk)$ and off-diagonal terms $E_{\g, \p}(\bk)$. Lemma~\ref{lem:main} gives asymptotic estimates for $M_{\g,\p}(\bk)$ and $E_{\g,\p}(\bk)$ as $k$ tends to infinity. Finally, Section~\ref{sec:main} and Section~\ref{sec:error} provide the detailed proof of Lemma~\ref{lem:main}.

\section{Notations and Preliminaries}\label{sec:background}

\subsection{The Base Field} Throughout the paper, we take the base field to be a totally real number field $F$ of degree $n$ over $\mathbb{Q}$, and we denote its ring of integers by $\ringO_{F}$. The absolute norm of an ideal $\a$ in $\ringO_{F}$ is defined as $\norm(\a)= [\ringO_{F}:\a]$. In fact, the absolute norm defined as such can be extended by multiplicativity to the group, $I(F)$, of fractional ideals of $F$. The trace and the norm of an element $x$ in the field extension $F/\mathbb{Q}$ are denoted by $\trace(x)$ and $\norm(x)$, respectively. Notice that for a principal ideal $(x)=x\ringO_{F}$, we have $\norm\left((x)\right)=|\norm(x)|$. The different ideal of $F$ and its discriminant over $\mathbb{Q}$ are denoted by $\Dif_{F}$ and $d_{F}$, respectively. We also have the identity $\norm(\Dif_{F})=|d_{F}|$.

The real embeddings of $F$ are denoted by $\sigma_{j}:x\mapsto x_{j}:=\sigma_{j}(x)$ for $j=1,\dots,n$. Once and for all, we fix an order of the embeddings, say $\sigma:=(\sigma_1,\dots,\sigma_n)$, so that any element $x$ in $F$ can be identified with the $n$-tuple $(x_{1},\dots,x_{n})$ in $\mathbb{R}^{n}$. This tuple may be, again, denoted by $x$ when no confusion arises. We say $x$ is totally positive and write $x\gg0$ if $x_{j}>0$ for all $j$, and for any subset $X\subset F$, we put $X^{+}=\{x\in X: x\gg0\}$. 

We denote the narrow class group of $F$ by $Cl^{+}(F)$. This is defined as the quotient group $I(F)/P^+(F)$, where $P^+(F)$ is the group of principal ideals generated by totally positive elements in $F$. It is well-known that $Cl^{+}(F)$ is a finite group, and its cardinality is denoted by $h_{F}^{+}$. We let  $\{\a_{1},\a_{2},\dots,\a_{h_{F}^{+}}\}$ be a fixed choice of representatives of the narrow ideal classes in $Cl^{+}(F)$. We write $\a\sim\b$ when fractional ideals $\a$ and $\b$ belong to the same narrow ideal class, in which case we have $\a=\xi\b$ for some $\xi$ in $F^{+}$. The symbol $[\a\b^{-1}]$ is used to refer to this element $\xi$ which is only unique up to multiplication by totally positive units in $\ringO_{F}$.

Let $\mathbb{A}_{F}$ be the ring of ad\`eles of $F$, and let $F_v$ be the completion of $F$ at a place $v$ of $F$. For a non-archimedean place $v$, we denote by $\ringO_v$ the local ring of integers. Furthermore, we let $F_\infty=\prod_{v|\infty} F_v$, where the product is taken over all archimedean places of $F$. In what follows, we make the identifications 
$$F_{\infty}=\mathbb{R}^{n},\quad \GL_{2}^{+}(F_{\infty})=\GL_{2}^{+}(\mathbb{R})^{n},\quad\text{and}\quad  \SO_{2}(F_{\infty})=\SO_{2}(\mathbb{R})^{n}.$$
In particular, each $r\in \SO_{2}(F_{\infty})$ can be expressed as 
$$r({\theta})=\left(r(\theta_{1}),\dots,r(\theta_n)\right)=\left(\left[ \begin{array}{cc}
\cos\theta_{j} & \sin\theta_{j}  \\ 
-\sin\theta_{j} & \cos\theta_{j} \end{array} \right]\right)_{j=1}^{n}.$$ 

Given an ideal $\n\subset\ringO_{F}$ and a non-archimedean place $v$ in $F$, we define the subgroup $K_{v}(\n)$ of $\GL_{2}(F_{v})$ as 
$$K_{v}(\n)=\left\{\left[ \begin{array}{cc}
a & b  \\
c & d \end{array} \right]\in \GL_{2}(\ringO_{v}): c\in \n \ringO_v\right\},$$ 
and put
\begin{equation*}\label{eqn:K_0}
K_{0}(\n)=\prod_{v<\infty}K_{v}(\n).
\end{equation*}

Before concluding the section, we recall the multi-index notation which is frequently used in this paper for convenience: For given $n$-tuples ${x}$ and ${z}$ and a scalar $a$, we set 
$${x}^{{z}}=\prod_{j=1}^{n}x_{j}^{z_{j}}\quad \text{and}\quad a^{{z}}=a^{\sum_{j=1}^{n}z_{j}}. $$
Such multi-index notation will also be employed to denote certain products of the gamma functions and the $J$-Bessel functions. See Sections~\ref{sec:RS_conv} and \ref{sec:moment}, respectively.

\subsection{Hilbert Modular Forms}\label{sec:HMF}
In this section, we recall the definition and some properties of the space of ad\`elic Hilbert modular forms, and we explain briefly the relation it bears to the space of classical Hilbert modular forms. Our exposition in the ad\`elic setting borrows heavily from that of Trotabas \cite[Section 3]{trotabas}. 

A complex-valued function $\f$ on $\GL_{2}(\mathbb{A}_F)$ is said to be an (ad\`elic) Hilbert cusp form of weight $\bk\in 2\mathbb{N}^{n}$, level $\n$, and with the trivial character if it satisfies the following conditions (Trotabas \cite[Definition~3.1]{trotabas}):
\begin{enumerate}
\item The identity $\f(\gamma xgr({\theta})u)=\f(g)\exp(i\bk{\theta})$ holds for all $\gamma\in\GL_{2}(F)$, $x\in\mathbb{A}_{F}^{\times}$, $g\in\GL_{2}(\mathbb{A}_{F})$, $r(\theta)\in\SO_{2}(F_{\infty})$, and $u\in K_{0}(\n)$.
\item Viewed as a smooth function on $\GL_{2}^{+}(F_{\infty})$, $\f$ is an eigenfunction of the Casimir element $\Delta:=(\Delta_{1},\dots,\Delta_{n})$ with eigenvalue $\displaystyle{\frac{k}{2}\left(1-\frac{k}{2}\right)}$.
\item We have $\displaystyle{\int_{F\backslash\mathbb{A}_{F}}\f\left(\left[ \begin{array}{cc}
1 & x  \\
0 & 1 \end{array} \right]g\right)dx=0}$ for all $g\in \GL_{2}(\mathbb{A}_{F})$ \hspace{.2in} (cuspidality condition).
\end{enumerate} 
We denote by $S_{\bk}(\n)$ the space of Hilbert cusp forms of weight $k$, level $\n$, and with the trivial character. 
\begin{remark}
As this paper only concerns forms with the trivial character, the specification of character will be omitted from now on. Moreover, we always take the weight vector $k$ to be in $2\mathbb{N}^{n}$, for otherwise the space $S_{\bk}(\n)$ would be trivial.
\end{remark}

Next, we introduce some notation which is needed to define the Fourier coefficients of a Hilbert cusp form $\f$. By the Iwasawa decomposition, any element $g$ in $\GL_{2}^{+}(F_{\infty})$ can be uniquely expressed as 
$$g=\left[ \begin{array}{cc}
{z} & 0  \\
0 & {z} \end{array} \right]\left[ \begin{array}{cc}
1 & {x}  \\
0 & 1 \end{array} \right]\left[ \begin{array}{cc}
{y} & 0  \\
0 & 1 \end{array} \right]r({\theta}),$$ 
with ${z}, {y}\in F_{\infty}^{+}$, ${x}\in F_{\infty}$, and $r({\theta})\in \SO_{2}(F_{\infty})$. Using the decomposition, we define $W_{\infty}^{0}(g)$ by
$$W_{\infty}^{0}(g)={y}^{\frac{k}{2}}\exp\left(2\pi i ({x}+i{y})\right)\exp\left(i\bk{\theta}\right).$$ In fact, the function $W_{\infty}^{0}$ is the new vector in the Whittaker model of the discrete series representation $\bigotimes_{j}\mathcal{D}(k_{j}-1)$ of $\GL_{2}(F_{\infty})$ (restricted to $\GL_{2}^{+}(F_{\infty})$). 

For $\f\in S_{\bk}(\n)$, $ g\in \GL_{2}^{+}(F_{\infty})$, and $\a\in I(F)$, we have the Fourier expansion 
\begin{equation*}\label{eqn:adelic-fourier-expansion}\f\left(g\left[ \begin{array}{cc}
\mathrm{id}(\a\Dif_F^{-1}) & 0  \\
0 & 1 \end{array} \right] \right)=\sum_{\nu\in(\a^{-1})^{+}}\frac{C(\nu,\a\Dif_F^{-1},\f)}{\norm(\nu\a)^{\frac{1}{2}}}W_{\infty}^{0}\left(\left[\begin{array}{cc}
\mathbf{\nu} & 0  \\
0 & 1 \end{array} \right] g\right),\end{equation*}
where $\mathrm{id}(\a\Dif_F^{-1})$ is the idele of $F$ associated with the ideal $\a\Dif_F^{-1}$. 
The Fourier coefficient of $\f$ at any integral ideal $\m$ in $\ringO_F$ is then defined as $C_\f(\m)=C(\nu,\a\Dif_F^{-1},\f)$, where $\a$ is a unique choice of representative in $\{\a_1,\dots,\a_{h_F^+}\}$ such that $\m\sim\a$, and $\nu=[\m\a^{-1}]$. Notice that $\nu$ is necessarily an element in $(\a^{-1})^+$, unique up to multiplication by $\ringO_F^{\times +}$. We say $\f$ is normalized if $C_{\f}(\ringO_{F})=1$.

We now retrieve the classical setting as it plays an important role in Section~\ref{sec:error}. Indeed, it is well-known to experts that a form $\f$ in $S_{\bk}(\n)$ can be viewed as an $h_{F}^{+}$-tuple of classical Hilbert cusp forms $f_{\a_i}$ indexed by the narrow class representatives $\{\a_1,\dots,\a_{h_{F}^{+}}\}$. Each function $f_{\a_{i}}$ on $\mathbb{H}^{n}$ is defined via the map
\begin{equation}\label{eqn:classicalHMF}
 z:=x+iy \mapsto f_{\a_{i}}(z)=y^{-\frac{k}{2}}\f\left(\left[ \begin{array}{cc} y & x  \\ 0 & 1 \end{array} \right]\left[ \begin{array}{cc} \mathrm{id}(\a_i\Dif_{F}^{-1}) & 0  \\ 0 & 1 \end{array} \right] \right),
 \end{equation}
which yields a classical Hilbert cusp form of weight $k$ with respect to the congruence subgroup
\[\Gamma_0(\n,\a_i\Dif_F^{-1})=\left\{\left[\begin{array}{cc} a&b\\c&d\end{array}\right]\in\GL_2^+(F)\,:\, a, d\in\ringO_F, b\in\a_i\Dif_F^{-1},c\in\n\a_i^{-1}\Dif_F,ad-bc\in\ringO_F^{\times +}\right\}.\]
This means that each $f_{\a_i}$ satisfies the automorphy condition, $f_{\a_{i}}|\!|_k\gamma=f_{\a_i}$, for all $\gamma$ in $\Gamma_0(\n,\a_{i}\Dif_F^{-1})$. Here we recall that $|\!|_k$ is the weight $k$ slash operator given by
\[ f|\!|_k\gamma(z)=\left(\det \gamma\right)^{k/2}j(\gamma, z)^{-k}f(\gamma z).\]
It follows from equation (\ref{eqn:classicalHMF}) that the Fourier coefficients of 
\begin{equation*}\label{eqn:classical-fourier-expansion}f_{\a_i}(z)=\sum_{\nu\in(\a_i^{-1})^+}a_\nu(f_{\a_i})\exp(2\pi i \trace(\nu z))\end{equation*}
 are given by 
\begin{equation}\label{eqn:classical-fourier-coefficients} a_\nu(f_{\a_i})=\frac{C(\nu,\a_i\Dif_F^{-1}, \f)}{\norm(\nu\a_i)^{1/2}}\nu^{\frac{k}{2}}=\frac{C_\f(\m)}{\norm(\m)^{1/2}}\nu^{\frac{k}{2}}.\end{equation}
The reader is referred to, for instance, Garrett \cite[Chapter 1, 2]{garrett}, Raghuram-Tanabe \cite[Section 4]{JRMS-2011}, and Shimura \cite[Section 2]{shimura-duke} for more details on this realization.

The space of cusp forms can be decomposed as $S_k(\n)=S_k^{\mathrm{old}}(\n)\oplus S_k^{\mathrm{new}}(\n)$ where $S_k^{\mathrm{old}}(\n)$ is the subspace of cusp forms that come from lower levels. The new space $S_k^{\mathrm{new}}(\n)$ is the orthogonal complement of $S_{\bk}^{\mathrm{old}}(\n)$ with respect to the inner product 
$$\left<\f,\bf{h}\right>_{S_{\bk}(\n)}=\int_{\GL_{2}(F)\mathbb{A}_{F}^{\times}\backslash \GL_{2}(\mathbb{A}_{F})/K_{0}(\n)}\f(g)\overline{{\bf{h}}(g)}\;dg.$$

The space $S_k(\n)$ has an action of Hecke operators $\{T_{\m}\}_{\m\subset\ringO_{F}}$ much like the classical setting over $\mathbb{Q}$. A Hilbert cusp form $\f$ in $S_k(\n)$ is said to be primitive if it is a normalized common Hecke eigenfunction in the new space. We denote by $\Pi_{\bk}(\n)$ the (finite) set of all primitive forms of weight $\bk$ and level $\n$. If $\f$ is a function in $\Pi_{\bk}(\n)$, it follows from the work of Shimura \cite{shimura-duke} that the coefficients $C_{\f}(\m)$ are equal to the Hecke eigenvalues for $T_{\m}$ for all $\m$. Moreover, since $\f$ is with the trivial character, the coefficients $C_{\f}(\m)$ are known to be real for all $\m$.

\subsection{Rankin-Selberg Convolutions}\label{sec:RS_conv}
We now recall the construction of Rankin-Selberg convolutions of two Hilbert modular forms following Shimura~\cite[Section~4]{shimura-duke}. We note, however, that our normalization is slightly different from what Shimura uses. Consider two primitive forms $\f\in \Pi_{\bk}(\q)$ and $\g\in \Pi_{\bl}(\n)$, where we assume that $\q$ and $\n$ are coprime. The Rankin-Selberg $L$-function associated with $\f$ and $\g$ is defined as 
$$L(\f\otimes \g,s)=\zeta_{F}^{\n\q}(2s)\sum_{\m\subset \ringO_{F}}\frac{C_{\f}(\m)C_{\g}(\m)}{\norm(\m)^{s}},$$ 
where $\zeta_F^{\n\q}(s)$ is the Dedekind zeta function of $F$ away from $\n\q$, that is,
 $$\zeta_{F}^{\n\q}(2s)=\zeta_{F}(2s)\prod_{\substack{\l|\n\q \\ \l\,:\, \text{prime}}}(1-\norm(\l)^{-2s}).$$ 
This series is absolutely convergent for $\Re(s)>1$ since the Fourier coefficients of $\f$ and $\g$ satisfy the Ramanujan-Petersson bound (proven by Blasius in \cite{blasius}):
\begin{equation*}\label{eqn:ramanujan-bound}
C_{\f}(\m)\ll_{\epsilon}\norm(\m)^{\epsilon}\quad\text{and}\quad C_{\g}(\m)\ll_{\epsilon}\norm(\m)^{\epsilon}.
\end{equation*}
It is useful in applications to write the series $L(\f\otimes \g,s)$ as a sum over all positive integers. In fact, it is easy to see that  
$$L(\f\otimes \g,s)=\sum_{m=1}^\infty\frac{b_{m}^{\n\q}(\f\otimes\g)}{m^s},$$
where
$$ b_{m}^{\n\q}(\f\otimes\g)=\sum_{d^2|m}\left(a_d(\n\q)\sum_{\norm(\m)=m/d^2}C_\f(\m)C_\g(\m)\right)$$
with $a_d(\n\q)$ being the number of integral ideals of norm $d$ that are coprime to $\n\q$. 

Let the archimedean part of this $L$-function be 
 $$L_{\infty}(\f\otimes\g,s)=\Gamma\left(s+\frac{|\bk-{\bl}|}{2}\right)\Gamma\left(s-1+\frac{\bk+\bl}{2}\right),$$
  with
$$\Gamma\left(s+\frac{|\bk-{\bl}|}{2}\right)=\prod_{j=1}^{n}\Gamma\left(s+\frac{|k_j-l_j|}{2}\right)\;\;\text{and}\;\;\Gamma\left(s-1+\frac{\bk+\bl}{2}\right)=\prod_{j=1}^{n}\Gamma\left(s-1+\frac{k_j+l_j}{2}\right).$$
Define the completed $L$-function 
$$\Lambda(\f\otimes \g,s)=(2\pi)^{-2sn-\kappa}\norm(\n\q\Dif_F^2)^{s}L_{\infty}(\f\otimes\g,s)L(\f\otimes \g,s),$$
with $\kappa:=(\kappa_1,\dots,\kappa_n)$ and $\kappa_j=\mathrm{max}\{k_j,l_j\}$.
Then $\Lambda(\f\otimes \g,s)$ has analytic continuation to $\mathbb{C}$ as an entire function and satisfies the functional equation \begin{equation}\label{functional-equation}\Lambda(\f\otimes\g,s)=\Lambda(\f\otimes \g,1-s).\end{equation}
See, for instance, Prasad-Ramakrishnan \cite{prasad-ramakrishnan} for the details.

We note that we fix the level of $\f$ to be $\q=\ringO_F$ from now on. Also, since our aim is to study the asymptotic behavior (as $k_j$ tends to infinity) of an average expression over $\f$ as it appears in Theorem~\ref{thm:main}, one can freely assume that $k_{j}>l_{j}$ for all $j=1,\dots,n$. This condition will be imposed for the rest of this paper. We remark that the main result of this paper will hold, with minor adjustments to the proof, when only $\max_{j}\{k_j\}$ goes to infinity.

\section{Proof of Theorem~\ref{thm:main}}

\subsection{Twisted First Moment}\label{sec:moment}

Let $\g$ be a fixed form in $\Pi_{\bl}(\n)$, and let $\p$ be an ideal in $F$ which is either $\ringO_{F}$ or prime. We consider the twisted first moment 
\begin{equation}\label{eqn:twisted-first-moment} \sum_{\f\in\Pi_{\bk}(\ringO_F)}L\left(\f\otimes \g,\frac12\right)C_\f(\p)\omega_\f,
\end{equation} 
where
\[ \omega_\f=\frac{\Gamma({k-1})}{(4\pi)^{\bk-1}|d_F|^{1/2}\left<\f,\f\right>_{S_{k}(\ringO_F)}}.\]  
The primary goal of this paper is to establish an asymptotic formula of this moment as $k$ approaches infinity. Indeed, our main theorem is a direct application of such a formula as will become evident in the next section (Section~\ref{sec:proof}). In order to analyze the sum in equation (\ref{eqn:twisted-first-moment}), we utilize two standard results from analytic number theory tailored to the setting of Hilbert modular forms; namely, an approximate functional equation and a Petersson trace formula which we now proceed to describe.

\begin{prop}[Approximate functional equation]\label{approximate-functional-equation}
Let $G(u)$ be a holomorphic function on an open set containing the strip $|\Re(u)|\leq 3/2$ and bounded therein, satisfying $G(u)=G(-u)$, and $G(0)=1$. Then we have 
\[ L\left(\f\otimes \g,\frac12\right)=2\sum_{m=1}^\infty\frac{b_{m}^{\n}(\f\otimes\g)}{\sqrt{m}}V_{1/2}\left(\frac{4^{n}\pi^{2n}m}{\norm(\n\Dif_F^2)}\right),\]
where 
\begin{equation}\label{eqn:v}
V_{s}(y)=\frac{1}{2\pi i}\int_{(3/2)}y^{-u}\gamma(s, u)G(u)\frac{du}{u}
\end{equation}
with
\[ \gamma(s,u)=\frac{L_\infty(\f\otimes \g,s+u)}{L_\infty( \f\otimes \g,s)}.\]
Moreover, the derivatives of $V_{1/2}(y)$ satisfy 
$$y^{a}V_{1/2}^{(a)}(y)\ll\left(1+\frac{y}{k^{2}}\right)^{-A} \quad\text{and}\quad
y^{a}V_{1/2}^{(a)}(y)=\delta_{a}+O\left(\left(\frac{y}{k^{2}}\right)^{\alpha}\right)$$ for some $0<\alpha\leq1$. The value of $\delta_a$ is taken to be $1$ if $a=0$ and $0$ otherwise, 
and the implied constants depend on $a$, $A$, and $\alpha$.
\end{prop}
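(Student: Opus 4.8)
The plan is to establish the approximate functional equation by the standard contour-shift argument, then extract the decay and asymptotic properties of $V_{1/2}$ directly from its Mellin--Barnes integral representation (\ref{eqn:v}).

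\textbf{Deriving the approximate functional equation.} First I would consider the integral
\[ I = \frac{1}{2\pi i}\int_{(3/2)}\Lambda\left(\f\otimes\g,\tfrac12+u\right)G(u)\frac{du}{u}, \]
where $\Lambda$ is the completed $L$-function defined in Section~\ref{sec:RS_conv}. On the line $\Re(u)=3/2$ I would expand $L(\f\otimes\g,\frac12+u)$ as the Dirichlet series $\sum_m b_m^{\n}(\f\otimes\g)m^{-(1/2+u)}$ (absolutely convergent there), interchange summation and integration, and recognize the resulting $u$-integral as $(2\pi)^{-n-\kappa}\norm(\n\Dif_F^2)^{1/2}L_\infty(\f\otimes\g,\frac12)$ times $\sqrt{m}\,V_{1/2}\!\left(\tfrac{4^n\pi^{2n}m}{\norm(\n\Dif_F^2)}\right)^{-1}$ after absorbing the archimedean and conductor factors into the definition of $\gamma(\frac12,u)$ and $V_{1/2}$. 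Next I would shift the contour from $\Re(u)=3/2$ to $\Re(u)=-3/2$, picking up a single residue at $u=0$ equal to $\Lambda(\f\otimes\g,\frac12)$ because $G(0)=1$ and $\Lambda$ is entire; the shift is justified since $G$ is bounded on the strip and the gamma factors decay rapidly in vertical strips. Finally I would apply the functional equation (\ref{functional-equation}) together with $G(u)=G(-u)$ to fold the shifted integral back onto the line $\Re(u)=3/2$, which shows the two contributions are equal and yields the stated formula with the factor $2$ and the symmetric completion dividing out cleanly.

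\textbf{Estimating $V_{1/2}$ and its derivatives.} For the growth bounds I would work with $\gamma(\frac12,u)=L_\infty(\f\otimes\g,\frac12+u)/L_\infty(\f\otimes\g,\frac12)$, which is a ratio of products of gamma functions with arguments shifted by the half-integers $\frac{|k_j-l_j|}2$ and $\frac{k_j+l_j}2-1$. Differentiating under the integral sign gives $y^a V_{1/2}^{(a)}(y)=\frac{1}{2\pi i}\int_{(c)}(-u)(-u-1)\cdots(-u-a+1)\,y^{-u}\gamma(\frac12,u)G(u)\frac{du}{u}$; I would then use Stirling's formula to show $\gamma(\frac12,u)\ll (1+|u|/k^2)^{C}$ uniformly, the key point being that each quotient $\Gamma(\frac12+u+\beta_j)/\Gamma(\frac12+\beta_j)$ with $\beta_j$ of size comparable to $k$ contributes a factor polynomially controlled by $|u|/k$. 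Shifting the contour far to the right (to $\Re(u)=A$) and using the rapid decay of $G$ and the gamma quotient produces the bound $(1+y/k^2)^{-A}$; shifting instead to the left past $u=0$ to $\Re(u)=-\alpha$ isolates the residue $\delta_a$ at $u=0$ and bounds the remaining integral by $(y/k^2)^\alpha$.

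\textbf{Main obstacle.} I expect the delicate point to be making the dependence on the weight $k$ explicit and uniform in the Stirling estimates for $\gamma(\frac12,u)$. Because the gamma-factor arguments grow linearly in the $k_j$, a naive application of Stirling loses the crucial scaling in which $y$ and $k^2$ appear only through the ratio $y/k^2$; one must track how the shift $u$ interacts with the large parameters $\frac{k_j+l_j}2$ so that the transition region is correctly located at $y\asymp k^2$ rather than, say, $y\asymp k$. Getting this scaling right, uniformly in all $n$ archimedean components and with control over the implied constants' dependence on $a$, $A$, and $\alpha$, is where the real work lies; the contour shifts and residue computations themselves are routine once the gamma-quotient bound is in hand.
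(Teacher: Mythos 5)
Your proposal is correct and follows essentially the same route as the paper, which simply cites its level-aspect companion \cite{2016TRANS} and Iwaniec--Kowalski for exactly this standard contour-shift-plus-functional-equation argument and the Stirling-based bounds on $\gamma(\frac12,u)$ giving the $y/k^2$ scaling. (Only a cosmetic slip: the inner $u$-integral equals the archimedean constant times $m^{-1/2}V_{1/2}(\cdot)$, not times $\sqrt{m}\,V_{1/2}(\cdot)^{-1}$.)
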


\begin{proof}
See the authors' level aspect paper \cite[Section~4]{2016TRANS}. The estimates on $V_{1/2}^{(a)}(y)$ follow from Iwaniec-Kowalski \cite[Proposition~5.4]{iwaniec-kowalski}.
\end{proof}

The function $G(u)$ appearing in Proposition~\ref{approximate-functional-equation} is set to be $e^{u^2}$ throughout this paper. Another crucial tool to our work is a Petersson trace formula due to Trotabas (see \cite[Proposition~6.3]{trotabas}) which states the following.

\begin{prop}[Petersson trace formula]\label{trace-formula}
Let $\q$ be an integral ideal in $F$. Let $\a$ and $\b$ be fractional ideals in $F$. For $\alpha\in\a^{-1}$ and $\beta\in\b^{-1}$, we have \begin{align*} &\sum_{\f\in H_{\bk}(\q)}\frac{\Gamma(\bk-\boldsymbol{1})}{(4\pi)^{\bk-\boldsymbol{1}}|d_{F}|^{1/2}\left<\f,\f\right>_{S_{\bk}(\q)}}C_{\f}(\alpha\a)\overline{C_{\f}(\beta\b)}\\
&\hspace{.6in}=\1_{\alpha\a=\beta\b}+C\sum_{\substack{\mathfrak{c}^{2}\sim\a\b\\c\in\c^{-1}\q\backslash\{0\}\\\epsilon\in\mathcal{O}_{F}^{\times+}/\mathcal{O}_{F}^{\times2}}}\frac{{\mathit Kl}(\epsilon\alpha,\a; \beta,\b;c,\c)}{\norm(c\c)}J_{\bk-1}\left(\frac{4\pi\sqrt{\epsilon\alpha\beta\left[\mathfrak{a}\mathfrak{b}\mathfrak{c}^{-2}\right]}}{|c|}\right),\end{align*}
where $\dis{C=\frac{(-1)^{\bk/2}(2\pi)^{n}}{2|d_{F}|^{1/2}}}$, and $H_{\bk}(\q)$ is an orthogonal basis for the space $S_{\bk}(\q)$.
\end{prop}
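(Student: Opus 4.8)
The statement is a Petersson-type trace formula, and the plan is to prove it by the classical Poincar\'e-series method, transported to the Hilbert setting through the $h_F^+$-tuple realization in (\ref{eqn:classicalHMF}). For a fixed narrow class representative $\a$ and a totally positive $\alpha\in\a^{-1}$, I would introduce the weight-$\bk$ holomorphic Poincar\'e series for the congruence group $\Gamma_0(\q,\a\Dif_F^{-1})$,
$$P_{\alpha,\a}\;=\;\sum_{\gamma\in\Gamma_\infty\backslash\Gamma_0(\q,\,\a\Dif_F^{-1})}\left(e^{2\pi i\,\trace(\alpha z)}\right)|\!|_{\bk}\,\gamma,$$
where $\Gamma_\infty$ is the stabilizer of the cusp at infinity. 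Since each $k_j$ is large and the seed decays as $y_j\to\infty$, absolute convergence and membership in $S_\bk(\q)$ are routine. Everything then reduces to computing the Fourier expansion of $P_{\alpha,\a}$ in two different ways: spectrally, via its expansion over the orthogonal basis $H_\bk(\q)$, and geometrically, by unwinding the coset sum.

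On the spectral side, unfolding the Petersson inner product collapses the sum over $\Gamma_\infty\backslash\Gamma_0$; carrying out the $x$-integral matches Fourier indices and the $y$-integral produces a gamma factor, so that $\langle P_{\alpha,\a},\f\rangle$ is a constant multiple of $\overline{C_\f(\alpha\a)}$, the constant being $\Gamma(\bk-\boldsymbol{1})/(4\pi\alpha)^{\bk-\boldsymbol{1}}$ times a volume factor that supplies the $|d_F|^{1/2}$, once the classical coefficients are rewritten adelically through (\ref{eqn:classical-fourier-coefficients}). Expanding $P_{\alpha,\a}=\sum_{\f\in H_\bk(\q)}\frac{\langle P_{\alpha,\a},\f\rangle}{\langle\f,\f\rangle}\,\f$ and extracting the $\beta\b$-th Fourier coefficient then yields the harmonic weighted sum $\sum_\f\frac{\Gamma(\bk-\boldsymbol{1})}{(4\pi)^{\bk-\boldsymbol{1}}|d_F|^{1/2}\langle\f,\f\rangle_{S_\bk(\q)}}C_\f(\alpha\a)\overline{C_\f(\beta\b)}$ on the left-hand side, the powers of $\alpha$ and $\beta$ recombining into the adelic normalization of (\ref{eqn:classical-fourier-coefficients}) and leaving the clean factor $(4\pi)^{\bk-\boldsymbol{1}}$.

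On the geometric side, I would compute the same $\beta\b$-th Fourier coefficient of $P_{\alpha,\a}$ directly from a Bruhat-type decomposition of the cosets. The trivial coset contributes the diagonal term $\1_{\alpha\a=\beta\b}$. Each nontrivial coset is indexed by its lower-left entry $c$; summing over the residues of the upper-left entry modulo $c$ assembles the Kloosterman sum $\mathit{Kl}(\epsilon\alpha,\a;\beta,\b;c,\c)$, while the archimedean integral in each variable is a standard Bessel integral evaluating to $J_{k_j-1}$, i.e.\ the multi-index $J_{\bk-1}$ with argument $4\pi\sqrt{\epsilon\alpha\beta[\a\b\c^{-2}]}/|c|$. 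The ideal-theoretic bookkeeping of the entries of $\gamma\in\Gamma_0(\q,\a\Dif_F^{-1})$ forces $c\in\c^{-1}\q$ and the modulus ideal to satisfy $\c^2\sim\a\b$, producing the indexing set in the statement; collecting the per-place Bessel constants $2\pi i^{-k_j}=2\pi(-1)^{k_j/2}$, whose product is $(2\pi)^n(-1)^{\bk/2}$, together with the measure normalization, yields the constant $C$.

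The genuinely new difficulty, relative to the $\mathbb{Q}$-case, is the infinitude of $\ringO_F^\times$. The seed $e^{2\pi i\,\trace(\alpha z)}$ is invariant under $z\mapsto\epsilon^2 z$ for $\epsilon\in\ringO_F^{\times+}$, so both the fundamental domain for $\Gamma_\infty\backslash\mathbb{H}^n$ used in the unfolding and the coset representatives on the geometric side must be taken modulo the unit action, and the representatives can only be pinned down up to $\ringO_F^{\times2}$. This is exactly what produces the extra sum over $\epsilon\in\ringO_F^{\times+}/\ringO_F^{\times2}$ in the Kloosterman term, and keeping the convergence, the change of variables in the archimedean integrals, and the narrow-class bookkeeping all consistent with this folding is the delicate step. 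This is where I expect to spend most of the effort, and the point at which I would carefully match normalizations of inner products and Fourier coefficients in order to confirm the precise shape of the weight and of the constant $C$.
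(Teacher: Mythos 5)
First, note that the paper itself offers no proof of this proposition: it is quoted directly from Trotabas \cite[Proposition~6.3]{trotabas}, and the only ``proof'' in the paper is that citation. Your outline is essentially the standard Poincar\'e-series/Petersson argument that Trotabas carries out (in adelic language): spectral expansion of a Poincar\'e series against the orthogonal basis on one side, Bruhat decomposition giving the diagonal term plus Kloosterman--Bessel terms on the other. So in spirit you are reconstructing the right proof.

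There is, however, one concrete step in your sketch that would fail as written. You define a single classical Poincar\'e series $P_{\alpha,\a}$ on $\Gamma_0(\q,\a\Dif_F^{-1})$ and then propose to ``extract the $\beta\b$-th Fourier coefficient.'' A classical form on $\Gamma_0(\q,\a\Dif_F^{-1})$ has Fourier coefficients indexed only by $\nu\in(\a^{-1})^{+}$, i.e., by integral ideals $\nu\a$ in the narrow class of $\a$; when $\b\not\sim\a$ the coefficient $C_{\f}(\beta\b)$ lives on a different component $f_{\b}$ of the $h_F^{+}$-tuple, attached to $\Gamma_0(\q,\b\Dif_F^{-1})$, and is simply not visible in the expansion of $P_{\alpha,\a}$. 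To obtain the terms connecting distinct narrow classes one must either work with the adelic Poincar\'e series throughout or analyze the $\GL_2^{+}(F)$-cosets linking the component attached to $\a$ with the one attached to $\b$; it is precisely this bookkeeping that produces the auxiliary ideal $\c$ with $\c^{2}\sim\a\b$, the element $[\a\b\c^{-2}]$ in the Bessel argument, and the range $c\in\c^{-1}\q$. Your sentence attributing $\c^{2}\sim\a\b$ to ``the entries of $\gamma\in\Gamma_0(\q,\a\Dif_F^{-1})$'' cannot be correct, since for such $\gamma$ the lower-left entries lie in $\q\a^{-1}\Dif_F$ and this only recovers the case $\a\sim\b$. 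Your identification of the sum over $\epsilon\in\ringO_F^{\times+}/\ringO_F^{\times 2}$ as arising from folding by totally positive units (equivalently, from the ambiguity of $[\a\b\c^{-2}]$ up to units) is the right mechanism, and the normalization chase you describe is plausible; but the multi-component (narrow-class) structure is a missing idea in your plan, not merely a delicate step to be checked later.
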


In the proposition above, multi-index notation is used for the $J$-Bessel function. Indeed, we have \[J_{\bk-1}\left(\frac{4\pi\sqrt{\epsilon\alpha\beta\left[\mathfrak{a}\mathfrak{b}\mathfrak{c}^{-2}\right]}}{|c|}\right)=\prod_{j=1}^{n}J_{k_{j}-1}\left(\frac{4\pi\sqrt{\epsilon_{j}\alpha_{j}\beta_{j}\left[\mathfrak{a}\mathfrak{b}\mathfrak{c}^{-2}\right]_{j}}}{|c_{j}|}\right).\] For each $j$, the corresponding factor in the above product is an evaluation of the classical $J$-Bessel function $J_{k_{j}-1}$ which could be written via the Mellin-Barnes integral representation as
\begin{equation}\label{eqn:j-bessel}
J_{k_{j}-1}(x)=\int_{(\sigma)}\frac{\Gamma\left(\frac{k_{j}-1-s}{2}\right)}{\Gamma\left(\frac{k_{j}-1+s}{2}+1\right)}\left(\frac{x}{2}\right)^{s}\;ds\quad\quad \text{with}\;\;0<\sigma<k_{j}-1.
\end{equation}

 As for the Kloosterman sum, it is defined as follows. Given two fractional ideals $\a$ and $\b$, let $\c$ be an ideal such that $\c^{2}\sim\a\b$. For $\alpha\in\a^{-1}$, $\beta\in\b^{-1}$, and $c\in\c^{-1}$, the Kloosterman sum $\mathit{Kl}(\alpha,\a;\beta,\b;c,\c)$ is given by 

\begin{equation*}\label{eqn:kloosterman}
{\mathit Kl}(\alpha,\a; \beta,\b;c,\c)=\sum_{x\in \left(\a\Dif_{F}^{-1}\c^{-1}/\a\Dif_{F}^{-1}c\right)^{\times}}\exp\left(2\pi i\trace\left(\frac{\alpha x+\beta\left[\a\b\c^{-2}\right]\overline{x}}{c}\right)\right).
\end{equation*}
Here $\overline{x}$ is the unique element in $\left(\a^{-1}\Dif_{F}\c/\a^{-1}\Dif_{F}c\c^{2}\right)^{\times}$ such that $x\overline{x}\equiv 1\mod c\c$. The reader is referred to Section 2.2 and Section 6 in \cite{trotabas} for more details on this construction. 

We are now ready to start our investigation of the sum in (\ref{eqn:twisted-first-moment}). We begin by applying the approximate functional equation provided in Proposition \ref{approximate-functional-equation}. Then, a simple re-arrangement of the sums yields
\begin{align*} &\sum_{\f\in\Pi_{\bk}(\ringO_F)}L\left(\f\otimes \g,\frac12\right)C_\f(\p)\omega_\f \\
&= \sum_{\f\in\Pi_{\bk}(\ringO_F)} 2\sum_{m=1}^\infty\frac{b_{m}^{\n}(\f\otimes\g)}{\sqrt{m}}V_{1/2}\left(\frac{4^{n}\pi^{2n}m}{\norm(\n\Dif_F^2)}\right)C_\f(\p)\omega_\f\\
&= 2\sum_{m=1}^\infty \frac{1}{\sqrt{m}}V_{1/2}\left(\frac{4^{n}\pi^{2n}m}{\norm(\n\Dif_F^2)}\right)\sum_{\f\in\Pi_{\bk}(\ringO_F)}\omega_\f C_\f(\p)\sum_{d^2|m}a_d(\n)\sum_{\norm(\m)=m/d^2}C_\f(\m)C_\g(\m)\\
&= 2\sum_{m=1}^\infty \frac{1}{\sqrt{m}}V_{1/2}\left(\frac{4^{n}\pi^{2n}m}{\norm(\n\Dif_F^2)}\right)\sum_{d^2|m}a_d(\n)\sum_{\norm(\m)=m/d^2}C_\g(\m)\sum_{\f\in\Pi_{\bk}(\ringO_F)}\omega_\f C_\f(\p)C_\f(\m)\\
&=2\sum_{\m\subset\ringO_{F}} \frac{C_{\g}(\m)}{\sqrt{\norm(\m)}}\sum_{d=1}^{\infty}\frac{a_d(\n)}{d}V_{1/2}\left(\frac{4^n\pi^{2n}\norm(\m)d^2}{\norm(\n\Dif_F^2)}\right)\sum_{\f\in\Pi_{\bk}(\ringO_F)}\omega_\f C_\f(\p)C_\f(\m).
\end{align*}

Recall that we denote by $\{\a_{i}\}$ a fixed system of representatives for $Cl^{+}(F)$. For an ideal $\m$ of $\ringO_{F}$, we write $\m=\nu\a$ for some $\a\in\{\a_{i}\}$ and $\nu\in(\a^{-1})^{+}\mod \ringO_F^{\times+}$ as seen earlier. Similarly, we write $\p$ as $\p=\xi\b$ with  $\b\in\{\a_{i}\}$ and $\xi\in(\b^{-1})^{+}\mod \ringO_F^{\times+}$. 
Hence, applying the Petersson trace formula to the above expression, we obtain
\begin{eqnarray*}
&& \sum_{\f\in\Pi_{\bk}(\ringO_F)}L\left(\f\otimes \g,\frac12\right)C_\f(\p)\omega_\f\\
&&=2\sum_{\{\a_{i}\}}\sum_{\nu\in(\a_{i}^{-1})^{+}/\ringO_{F}^{\times +}} \frac{C_{\g}(\nu\a_{i})}{\sqrt{\norm(\nu\a_{i})}}\sum_{d=1}^{\infty}\frac{a_d(\n)}{d}V_{1/2}\left(\frac{4^n\pi^{2n}\norm(\nu\a_{i})d^2}{\norm(\n\Dif_F^2)}\right)\\
&&\hspace{.2in}\times\left\{\1_{\nu\a_{i}=\xi\b}+C\sum_{\substack{\c^{2}\sim\a_{i}\b\\c\in\c^{-1}\backslash\{0\}\\\epsilon\in\ringO_{F}^{\times+}/\ringO_{F}^{\times2}}}\frac{{\mathit Kl}(\epsilon\nu,\a_{i}; \xi,\b;c,\c)}{\norm(c\c)}J_{k-1}\left(\frac{4\pi\sqrt{\epsilon\nu\xi\left[\a_{i}\b\c^{-2}\right]}}{|c|}\right)\right\}.
\end{eqnarray*} 
For convenience, we write
\begin{equation}\label{eqn:main+error+old}
\sum_{\f\in\Pi_{\bk}(\ringO_F)}L\left(\f\otimes \g,\frac12\right)C_\f(\p)\omega_\f=M_{\g,\p}(k)+E_{\g,\p}(k),\end{equation}
where 
\begin{equation}\label{eqn:main}
M_{\g,\p}(k)=2\frac{C_{\g}(\p)}{\sqrt{\norm(\p)}}\sum_{d=1}^{\infty}\frac{a_d(\n)}{d}V_{1/2}\left(\frac{4^n\pi^{2n}\norm(\p)d^2}{\norm(\n\Dif_F^2)}\right),
\end{equation}
and
\begin{align}\label{eqn:error}
E_{\g,\p}(k)&=2C\sum_{\{\a_{i}\}}\sum_{\nu\in(\a_{i}^{-1})^{+}/\ringO_{F}^{\times +}} \frac{C_{\g}(\nu\a_{i})}{\sqrt{\norm(\nu\a_{i})}}\sum_{d=1}^{\infty}\frac{a_d(\n)}{d}V_{1/2}\left(\frac{4^n\pi^{2n}\norm(\nu\a_{i})d^2}{\norm(\n\Dif_F^2)}\right)\\
&\hspace{.7in}\times\sum_{\substack{\c^{2}\sim\a_{i}\b\\c\in\c^{-1}\backslash\{0\}\\ \epsilon\in\ringO_{F}^{\times+}/\ringO_{F}^{\times2}}}\frac{{\mathit Kl}(\epsilon\nu,\a_{i}; \xi,\b;c,\c)}{\norm(c\c)}J_{k-1}\left(\frac{4\pi\sqrt{\epsilon\nu\xi\left[\a_{i}\b\c^{-2}\right]}}{|c|}\right). \nonumber
\end{align}

In the following section, we introduce asymptotic estimates on (\ref{eqn:main}) and (\ref{eqn:error}), which we use to complete the proof of Theorem~\ref{thm:main}.

\subsection{Proof of Main Theorem}\label{sec:proof}

As mentioned at the beginning of Section~\ref{sec:moment}, the proof of our main theorem hinges upon an asymptotic formula for the twisted first moment (\ref{eqn:twisted-first-moment}). In fact, the desired formula follows immediately from the lemma below.

\begin{lemma}\label{lem:main}
Let $M_{\g,\p}(k)$ and $E_{\g,\p}(k)$ be as in (\ref{eqn:main}) and (\ref{eqn:error}). As $k$ approaches infinity, we have the following estimates:
\begin{enumerate}
\item $\dis M_{\g,\p}(k)=2\frac{C_\g(\p)}{\sqrt{\norm(\p)}} \gamma_{-1}^{\n}(F)\log(k)+O(1)$,\vskip .1in
 where $\gamma_{-1}^{\n}(F)$ is twice the residue of $\zeta_{F}^{\n}(2u+1)$ at $u=0$, and $\log (k)=\sum_{j=1}^{n}\log(k_{j})$.
\item $\dis E_{\g,\p}(k)=O(1)$.
\end{enumerate}
\end{lemma}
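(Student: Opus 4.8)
The plan is to establish the two estimates in Lemma~\ref{lem:main} separately, treating the main term $M_{\g,\p}(k)$ by a Mellin-transform/contour-shift argument and the off-diagonal term $E_{\g,\p}(k)$ by the Goldfeld-type realignment trick advertised in the introduction.

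\textbf{The main term.} For part (1), I would start from the defining sum \eqref{eqn:main} and replace $V_{1/2}$ by its Mellin-Barnes integral \eqref{eqn:v}, so that
\[
M_{\g,\p}(k)=2\frac{C_\g(\p)}{\sqrt{\norm(\p)}}\cdot\frac{1}{2\pi i}\int_{(3/2)}\left(\frac{4^n\pi^{2n}\norm(\p)}{\norm(\n\Dif_F^2)}\right)^{-u}\gamma\!\left(\tfrac12,u\right)G(u)\left(\sum_{d=1}^\infty\frac{a_d(\n)}{d^{1+2u}}\right)\frac{du}{u}.
\]
The inner Dirichlet series over $d$ is exactly $\zeta_F^{\n}(1+2u)$ (up to the usual Euler-factor bookkeeping from $a_d(\n)$), which has a simple pole at $u=0$. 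I would then shift the contour to $\Re(u)=-\delta$ for small $\delta>0$. The pole of $\zeta_F^{\n}(1+2u)$ at $u=0$ collides with the simple pole of $1/u$, producing a double pole whose residue contributes the leading $\log(k)$ term; here the crucial input is the Stirling asymptotics of $\gamma(\tfrac12,u)$ in the $k$-aspect, which for $k_j\to\infty$ behaves like a power of $\prod_j k_j$ and thus supplies the factor $\log(k)=\sum_j\log(k_j)$ after differentiating in $u$. The factor $\gamma_{-1}^\n(F)$ (twice the residue of $\zeta_F^\n(2u+1)$) emerges precisely from this residue computation, and the shifted integral on $\Re(u)=-\delta$ is $O(1)$ because $G(u)=e^{u^2}$ decays rapidly. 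This part is essentially a transcription of the level-aspect argument in \cite{2016TRANS}, so I expect it to be routine.

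\textbf{The error term.} Part (2) is the genuinely hard step, as the authors themselves flag. The naive route---inserting \eqref{eqn:v} for $V_{1/2}$, bounding the Kloosterman sums by Weil and the $J$-Bessel functions by their trivial or power-series bounds---fails, because the $J$-Bessel index $k-1$ grows with $k$ while the argument stays bounded, so that the decay one needs does not materialize term-by-term. The plan instead is to open everything up: substitute the Mellin-Barnes representation \eqref{eqn:j-bessel} for each $J_{k_j-1}$ and the definition of the Kloosterman sum ${\mathit Kl}(\epsilon\nu,\a_i;\xi,\b;c,\c)$ as an exponential sum. After interchanging the order of summation, the sum over $\nu\in(\a_i^{-1})^+/\ringO_F^{\times+}$ (together with the $c$ and $\epsilon$ sums) should reassemble into an additive twist of a Hilbert-modular $L$-function attached to $\g$---that is, a Dirichlet series in $\nu$ weighted by $C_\g(\nu\a_i)$ and an additive character $\exp(2\pi i\,\trace(\cdot))$. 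I would then apply the functional equation of this twisted $L$-function to realign the ranges of summation, after which the $V_{1/2}$-weight, the Stirling behavior of the gamma factors from \eqref{eqn:j-bessel}, and the rapid decay of $G$ combine to force convergence and yield the $O(1)$ bound.

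\textbf{Expected obstacle.} The principal difficulty, beyond the technical density of the Mellin-Barnes bookkeeping, is the extra sum over totally positive units $\epsilon\in\ringO_F^{\times+}/\ringO_F^{\times2}$ and the necessity of summing $\nu$ only modulo $\ringO_F^{\times+}$; these have no analogue over $\Q$ and obstruct a direct identification of the additive twist as a clean $L$-function. I anticipate that one must carefully fold the unit action into the additive character and into the choice of fundamental domain for $\nu$, keeping track of how the $J$-Bessel arguments $\sqrt{\epsilon\nu\xi[\a_i\b\c^{-2}]}/|c|$ transform, so that the resulting object genuinely satisfies a functional equation. Controlling the interchange of the (now multiple) contour integrals with the infinite sums, and verifying that the Stirling estimates in all $n$ archimedean components simultaneously produce enough decay in $k$, is where I expect the bulk of the delicate analysis to lie; this is precisely the content deferred to Section~\ref{sec:error}.
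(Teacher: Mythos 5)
Your treatment of part (1) is exactly the paper's: Mellin--Barnes for $V_{1/2}$, summation over $d$ producing $\zeta_F^{\n}(1+2u)$, a contour shift past the double pole at $u=0$, and Stirling applied to the resulting digamma factors $\frac{\Gamma'}{\Gamma}\left(\frac{k_j-l_j+1}{2}\right)+\frac{\Gamma'}{\Gamma}\left(\frac{k_j+l_j-1}{2}\right)$ to extract $\gamma_{-1}^{\n}(F)\log(k)$, with the shifted integral controlled by the gamma-quotient bound of Lemma~\ref{gamma-quotient}. Nothing to add there.

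For part (2) your strategy is also the one the paper follows, but the proposal stops short of the two devices that actually make it work, and one of them is a genuine missing idea rather than deferred bookkeeping. First, the ``functional equation of the additive twist'' is implemented concretely as the automorphy of the classical component $g_{\a}$ under the matrix $\alpha_{x,c}\in\SL_2(F)$ built from the Kloosterman variables (Lemma~\ref{lem:transformation}): after inserting a gamma integral, the $\nu$-sum reassembles into $g_{\a}(x/c+iy/c)$, which is converted to $g_{\a}^{x,c}(i/(yc)-\overline{x}/c)$ and re-expanded; absolute convergence of the resulting triple sum uses that $\alpha_{x,c}\alpha_{x_0,c_0}^{-1}$ lies in $\Gamma_0(\ringO_F,\a\Dif_F^{-1})$, so only finitely many transformed forms occur. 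This is consistent with what you describe. Second---and this is the gap---the unit sum over $\eta\in\ringO_F^{\times+}$ is \emph{not} resolved by folding the unit action into the additive character or the fundamental domain for $\nu$, as you anticipate. In the paper the $\epsilon$- and $\eta$-sums are unfolded into the $\nu$-sum, the transformation is applied, and the $\nu$-sum is then refolded, which reintroduces the $\eta$-sum with a factor $\eta^{-s/2}$. Convergence is then forced by shifting each $s_j$-contour to $\Re(s_j)=6+\lambda_j$ with $\lambda_j=\lambda_0>0$ precisely when $|\eta_j|>1$ and $\lambda_j=0$ otherwise, and invoking Luo's estimate (\ref{eqn:luo}) that $\sum_{\eta\in\ringO_F^{\times+}}\prod_{|\eta_j|>1}|\eta_j|^{-\lambda_0}<\infty$. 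A single uniform contour for all embeddings would not make the unit sum converge, since infinitely many $\eta$ have some coordinate less than $1$; the embedding-by-embedding choice is the crux of Section~\ref{sec:error}, and it is exactly the point your plan flags as an ``expected obstacle'' without supplying the mechanism.
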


\begin{proof}
Sections ~\ref{sec:main} and \ref{sec:error} are devoted to proving the first and second statements, respectively. 
\end{proof}

Applying the lemma above to equation (\ref{eqn:main+error+old}) yields the following proposition.
\begin{prop}\label{prop:moment-asymptotic-formula}
Let $\g\in \Pi_{l}(\n)$, and let $\p$ be either $\ringO_{F}$ or a prime ideal. Then, we have \[\sum_{\f\in\Pi_{\bk}(\ringO_F)}L\left(\f\otimes \g,\frac12\right)C_\f(\p)\omega_\f=2\frac{C_\g(\p)}{\sqrt{\norm(\p)}} \gamma_{-1}^{\n}(F)\log(k)+O(1) \quad (k\to \infty).\]
\end{prop}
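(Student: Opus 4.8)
The plan is to observe that Proposition~\ref{prop:moment-asymptotic-formula} is an immediate consequence of the decomposition recorded in equation (\ref{eqn:main+error+old}) together with the two estimates of Lemma~\ref{lem:main}. Equation (\ref{eqn:main+error+old}) already splits the twisted first moment (\ref{eqn:twisted-first-moment}) into the diagonal term $M_{\g,\p}(k)$ of (\ref{eqn:main}) and the off-diagonal term $E_{\g,\p}(k)$ of (\ref{eqn:error}). Substituting the estimate from Lemma~\ref{lem:main}(1), namely $M_{\g,\p}(k)=2\frac{C_\g(\p)}{\sqrt{\norm(\p)}}\gamma_{-1}^{\n}(F)\log(k)+O(1)$, together with the bound $E_{\g,\p}(k)=O(1)$ from Lemma~\ref{lem:main}(2), and adding, yields the asserted asymptotic. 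Thus, once Lemma~\ref{lem:main} is available, no further argument is needed, and the entire content of the proposition is carried by that lemma.

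Accordingly, I would direct the real effort at the two halves of Lemma~\ref{lem:main}, treating the diagonal and off-diagonal contributions separately. For the main term $M_{\g,\p}(k)$ in (\ref{eqn:main}), I would insert the Mellin--Barnes representation (\ref{eqn:v}) of $V_{1/2}$ and interchange the $d$-summation with the contour integral. The Dirichlet series $\sum_{d}a_d(\n)\,d^{-1-2u}$ then reassembles into $\zeta_F^{\n}(2u+1)$, which has a simple pole at $u=0$, while the $1/u$ in (\ref{eqn:v}) contributes a second pole there. The dependence on $k$ enters only through the ratio of gamma factors $\gamma(\frac12,u)=L_\infty(\f\otimes\g,\frac12+u)/L_\infty(\f\otimes\g,\frac12)$; by Stirling this ratio behaves like $X^{u}$ with $X$ of size $\prod_j k_j^2$, so that moving the contour to the left of the double pole at $u=0$ and computing the residue produces a term proportional to $\log X=2\log(k)+O(1)$. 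Matching constants identifies the coefficient of $\log(k)$ as $2\frac{C_\g(\p)}{\sqrt{\norm(\p)}}\gamma_{-1}^{\n}(F)$, since $\gamma_{-1}^{\n}(F)$ is twice the residue of $\zeta_F^{\n}(2u+1)$ at $u=0$; the shifted integral and the lower-order residue terms are absorbed into $O(1)$.

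The decisive obstacle is the bound $E_{\g,\p}(k)=O(1)$ for the off-diagonal term (\ref{eqn:error}), and I expect essentially all the difficulty to be concentrated here, as the introduction signals. The naive route---inserting Weil's bound for the Kloosterman sums $\mathit{Kl}(\epsilon\nu,\a_i;\xi,\b;c,\c)$ and standard estimates for the Bessel factors $J_{k-1}$, then summing---fails because the index $k-1$ of the $J$-Bessel function grows in tandem with the gamma factors coming from the functional equation, so the two sources of $k$-dependence cannot be controlled at once. The strategy I would adopt is the Goldfeld trick referred to in the introduction: open up the Kloosterman sum and each Bessel factor via the Mellin--Barnes representation (\ref{eqn:j-bessel}), recognize the resulting inner sum over $\nu$ (weighted by $C_\g(\nu\a_i)$ and the additive exponential from the Kloosterman sum) as an additively twisted $L$-function attached to $\g$, and apply its functional equation. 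This realigns the order of summation so that the $k$-growth from the Bessel index can be played off against the $k$-growth from the gamma factors and the whole expression estimated. The genuinely new feature over the case $F=\Q$ is the additional summation over totally positive units $\epsilon\in\ringO_F^{\times+}/\ringO_F^{\times2}$ that the Petersson formula of Proposition~\ref{trace-formula} introduces; I would have to organize this unit sum jointly with the multi-index Bessel and gamma analysis so that the estimates are uniform in each archimedean coordinate, which is the delicate technical point reserved for Section~\ref{sec:error}.
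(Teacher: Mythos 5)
Your proposal is correct and matches the paper exactly: the proposition is proved there in one line by substituting the two estimates of Lemma~\ref{lem:main} into the decomposition (\ref{eqn:main+error+old}), precisely as you say. Your sketches of the two halves of Lemma~\ref{lem:main} (contour shift and residue at $u=0$ for $M_{\g,\p}(k)$; opening the Kloosterman sums and Bessel integrals and exploiting the modular transformation of $\g$ together with the unit-sum convergence for $E_{\g,\p}(k)$) also follow the same route as Sections~\ref{sec:main} and~\ref{sec:error}.
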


Finally, we complete the proof of Theorem~\ref{thm:main}: Let $\g$ and $\g'$ be primitive forms satisfying all the hypotheses given in Theorem~\ref{thm:main}. 
Applying Proposition~\ref{prop:moment-asymptotic-formula} with $\p=\ringO_F$ gives $\gamma_{-1}^\n(F)=\gamma_{-1}^{\n'}(F)$. A second application of the proposition, with any prime $\p$ not dividing $\n\n'$, will then imply that $C_{\g}(\p)=C_{\g'}(\p)$.  It follows that the Hecke eigenvalues of $\g$ and $\g'$ for $T_\p$ are equal. Therefore, we have $\g=\g'$ by the strong multiplicity one theorem (cf. Bump \cite[Chapter~3]{bump} and Miyake \cite{miyake}). 

\section{Contribution of $M_{\g,\p}(k)$}\label{sec:main}
In this section, we establish an asymptotic formula in the weight aspect for the diagonal terms 
$$M_{\g,\p}(k)=2\frac{C_{\g}(\p)}{\sqrt{\norm(\p)}}\sum_{d=1}^{\infty}\frac{a_d(\n)}{d}V_{1/2}\left(\frac{4^n\pi^{2n}\norm(\p)d^2}{\norm(\n\Dif_F^2)}\right).$$ 
The summation over $d\in\Z_{>0}$ in the above expression can be evaluated as follows. Since we have
\begin{align*}
\sum_{d=1}^{\infty}\frac{a_d(\n)}{d}V_{1/2}\left(\frac{4^n\pi^{2n}\norm(\p)d^2}{\norm(\n\Dif_F^2)}\right)&=\frac{1}{2\pi i}\sum_{d=1}^{\infty}\frac{a_d(\n)}{d}\int_{(3/2)}\left(\frac{4^n\pi^{2n}\norm(\p)d^2}{\norm(\n\Dif_F^2)}\right)^{-u} \gamma\left(\frac12,u\right)G(u)\;\frac{du}{u}\\
&=\frac{1}{2\pi i}\int_{(3/2)}\left(\frac{4^n\pi^{2n}\norm(\p)}{\norm(\n\Dif_F^2)}\right)^{-u} \gamma\left(\frac12,u\right) G(u)\zeta_{F}^{\n}(2u+1)\;\frac{du}{u},
\end{align*}
where $\gamma(1/2,u)$ is defined as in Proposition~\ref{approximate-functional-equation},
shifting the contour of integration to $\Re(u)=-1/2$ gives 
\begin{align}\label{eqn:main_res}
\sum_{d=1}^{\infty}\frac{a_d(\n)}{d}V_{1/2}\left(\frac{4^n\pi^{2n}\norm(\p)d^2}{\norm(\n\Dif_F^2)}\right)&=\mathop{Res}_{u=0}\left(\left(\frac{4^n\pi^{2n}\norm(\p)}{\norm(\n\Dif_F^2)}\right)^{-u}\gamma\left(\frac12,u\right)G(u)\frac{\zeta_{F}^{\n}(2u+1)}{u}\right)\\
&\hspace{.4in}+\frac{1}{2\pi i}\int_{(-1/2)}\left(\frac{4^n\pi^{2n}\norm(\p)}{\norm(\n\Dif_F^2)}\right)^{-u}\gamma\left(\frac12, u\right)G(u)\zeta_{F}^{\n}(2u+1)\;\frac{du}{u}. \nonumber
\end{align}
We write the above integral as
$$I=-\frac{1}{2\pi}\int_{-\infty}^\infty \left(\frac{4^n\pi^{2n}\norm(\p) }{\norm(\n\Dif_F^2)}\right)^{\frac12-it}\gamma\left(\frac12,-\frac12+it\right)G\left(-\frac12+it\right)\zeta_F^{\n}(2it)\frac{dt}{-\frac12+it}.$$ 
In order to obtain an estimate for the $\gamma$-factor in the integral $I$, we recall the following bound on gamma quotients.

\begin{lemma}{\rm (\cite[Lemma 1]{ganguly-hoffstein-sengupta})}\label{gamma-quotient}
Suppose $A>0$ and $c$ is a real constant such that $\dis |c|<A/2$. Then
\[\frac{\Gamma(A+c+it)}{\Gamma(A+it)}\ll|A+it|^c.\]
\end{lemma}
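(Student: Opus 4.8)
The plan is to prove the bound $\frac{\Gamma(A+c+it)}{\Gamma(A+it)}\ll|A+it|^c$ directly from Stirling's asymptotic formula for the gamma function, treating the quotient as a single expression whose logarithm can be estimated uniformly. First I would recall Stirling's formula in the form $\log\Gamma(z)=\left(z-\tfrac12\right)\log z - z + \tfrac12\log(2\pi)+O\!\left(\tfrac{1}{|z|}\right)$, valid uniformly for $z$ in any sector bounded away from the negative real axis. Since $A>0$ and $|c|<A/2$, both $z=A+it$ and $z=A+c+it$ have real part bounded below by $A/2>0$, so they lie safely in the right half-plane and Stirling applies to both with a uniform error term.

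The key computation is then to form $\log\Gamma(A+c+it)-\log\Gamma(A+it)$ and show it equals $c\log|A+it|+O(1)$. Writing $z=A+it$, the leading terms give
\[
\left(z+c-\tfrac12\right)\log(z+c)-\left(z-\tfrac12\right)\log z - c.
\]
Here the main step is to expand $\log(z+c)=\log z + \log\!\left(1+\tfrac{c}{z}\right)=\log z + O\!\left(\tfrac{1}{|z|}\right)$, which is justified because $|c/z|\le |c|/(A/2)<1$ stays bounded away from $1$. Substituting this back and collecting terms, the $\left(z-\tfrac12\right)$ contributions cancel and one is left with $c\log z$ plus lower-order pieces of size $O(1)$ (the term $c\cdot\log z$ is the only surviving contribution of order $\log|z|$, while everything else is bounded). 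Taking real parts and exponentiating converts $c\,\mathrm{Re}(\log z)=c\log|z|$ into the factor $|A+it|^{c}$, and the bounded remainder exponentiates to an absolute implied constant.

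The main obstacle I anticipate is purely bookkeeping: ensuring that every error term in the Stirling expansion and in the logarithm expansion is genuinely $O(1)$ uniformly in $t$, rather than growing with $|t|$. The delicate point is the product $\left(z+c-\tfrac12\right)\log\!\left(1+\tfrac{c}{z}\right)$, where a factor of size $|z|$ multiplies a factor of size $O(1/|z|)$; one must verify the cross-term contributes only $O(1)$ and not something that scales with $t$. Keeping the constraint $|c|<A/2$ in force throughout is precisely what guarantees the argument of every logarithm stays in a fixed sector and every geometric expansion converges with a uniform constant. An alternative, slightly cleaner route would be to write $\frac{\Gamma(A+c+it)}{\Gamma(A+it)}=\exp\!\int_0^c \psi(A+s+it)\,ds$ using the digamma function $\psi$, together with the uniform estimate $\psi(w)=\log w + O(1/|w|)$ for $\mathrm{Re}(w)\ge A/2$; integrating $\log(A+s+it)$ over $s\in[0,c]$ immediately yields $c\log|A+it|+O(1)$ after taking real parts, and this avoids the cancellation bookkeeping entirely. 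I would likely present this digamma-integral version as the cleanest argument.
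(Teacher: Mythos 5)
The paper does not actually prove this statement: it is imported verbatim as Lemma~1 of Ganguly--Hoffstein--Sengupta \cite{ganguly-hoffstein-sengupta}, so there is no in-paper argument to compare yours against. Your proposal is correct and is the standard derivation of such bounds; of your two variants, the digamma one is indeed the cleanest, since $\log\Gamma(A+c+it)-\log\Gamma(A+it)=\int_0^c\psi(A+s+it)\,ds$ with the whole segment of integration kept in $\Re(w)\ge A-|c|>A/2$, where $\psi(w)=\log w+O(1/|w|)$ holds uniformly; integrating and taking real parts gives $c\log|A+it|+O\bigl(|c|+c^2/A\bigr)=c\log|A+it|+O(|c|)$ using only $|c|<A/2$. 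Two caveats are worth recording explicitly. First, your bookkeeping shows the exponentiated error is $e^{O(|c|)}$, so the implied constant depends on $c$; this is not an artifact of the method but unavoidable (take $c$ comparable to $A/2$ and compare Stirling on both sides --- the ratio of the two sides of the lemma then grows exponentially in $A$), and it is harmless here because in every application in Sections~3 and~4 the shift $c$ ranges over a fixed finite set while $A\asymp k_j\to\infty$. Second, the $O(1/|z|)$ Stirling error is only $O(1)$ when $|A+it|$ is bounded below, so the uniform statement should really be read for, say, $A\ge 1$; again this is exactly the regime in which the lemma is invoked. With those (standard) readings of the implied constant, your proof is complete.
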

Observe that Lemma \ref{gamma-quotient}, along with the trivial bound $|\Gamma(x+it)|\leq|\Gamma(x)|$, yields
\begin{align*}
\gamma\left(\frac12,-\frac12+it\right)&=\frac{\Gamma\left(-\frac12+it+\frac{k-l+1}{2}\right)}{\Gamma\left(it+\frac{k-l+1}{2}\right)}\frac{\Gamma\left(it+\frac{k-l+1}{2}\right)}{\Gamma\left(\frac{k-l+1}{2}\right)}\frac{\Gamma\left(-\frac12+it+\frac{k+l-1}{2}\right)}{\Gamma\left(it+\frac{k+l-1}{2}\right)}\frac{\Gamma\left(it+\frac{k+l-1}{2}\right)}{\Gamma\left(\frac{k+l-1}{2}\right)}\\
&\ll\left|\frac{k-l+1}{2}\right|^{-1/2}\left|\frac{k+l-1}{2}\right|^{-1/2}\ll{k}^{-1}.\end{align*}
Here, we remind the reader that $k^{-1}$ is, in fact,  $\prod_{j=1}^n k_j^{-1}$. By setting $G(u)=e^{u^2}$, we conclude that the integral $I$ satisfies
\[ I \ll k^{-1}\int_{-\infty}^\infty \left|\frac{e^{-t^2+\frac14}}{-\frac12+it}\right|\cdot|\zeta_F^{\n}(2it)|dt\ll  k^{-1}.\]
To compute the residue at $u=0$ in (\ref{eqn:main_res}), we recall that
\begin{eqnarray*}
e^{u^2}&=&1+u^2+\cdots,\\
\frac{\Gamma(a+u)}{\Gamma(a)}&=&1+\frac{\Gamma^\prime(a)}{\Gamma(a)}u+\cdots,\\
\left(\frac{4^n\pi^{2n}\norm(\p)}{\norm(\n\Dif_F^2)}\right)^{-u}&=&1-\log\left(\frac{4^n\pi^{2n}\norm(\p)}{\norm(\n\Dif_F^2)}\right)u+\cdots, \\
\zeta_F^{\n}(2u+1)&=&\frac{\gamma_{-1}^{\n}(F)}{2u}+\gamma_0^{\n}(F)+\cdots.\end{eqnarray*}
Using the above Taylor expansions, we obtain
\begin{align*} &\mathop{Res}_{u=0}\left(\left(\frac{4^n\pi^{2n}\norm(\p)}{\norm(\n\Dif_F^2)}\right)^{-u}\gamma\left(\frac12,u\right)\frac{e^{u^2}\zeta_{F}^{\n}(2u+1)}{u}\right)\\
&\hspace{.7em}=\gamma_0^{\n}(F)+\frac{\gamma_{-1}^{\n}(F)}{2}\left(\sum_{j=1}^n\frac{\Gamma^\prime}{\Gamma}\left(\frac{k_j-l_j+1}{2}\right)+\frac{\Gamma^\prime}{\Gamma}\left(\frac{k_j+l_j-1}{2}\right)-\log\left(\frac{4^n\pi^{2n}\norm(\p)}{\norm(\n\Dif_F^2)}\right)\right).\end{align*}
Moreover, we know by Stirling's formula that $$\frac{\Gamma'}{\Gamma}(a)=\log a-\frac{1}{2a}+O\left(\frac{1}{|a|^2}\right),$$
and therefore we conclude that $$M_{\g,\p}(k)=2\frac{C_{\g}(\p)}{\sqrt{\norm(\p)}}\gamma_{-1}^{\n}(F)\log\left(k\right)+O(1)\quad\quad k\to\infty.$$ 
This proves the first statement of Lemma~\ref{lem:main}.

\section{Contribution of $E_{\g,\p}(k)$}\label{sec:error}
In order to give an asymptotic estimate for the off-diagonal terms $E_{\g,\p}(k)$, it suffices to fix a representative $\a$ in $\{\a_i\}$ and consider a partial sum $E_{\g,\p,\a}(k)$ given by 
\begin{align}\label{eqn:partial-error}
E_{\g,\p,\a}(k)&=\sum_{\nu\in(\a^{-1})^{+}/ \ringO_{F}^{\times +}} \frac{C_{\g}(\nu\a)}{\sqrt{\norm(\nu\a)}}\sum_{d=1}^{\infty}\frac{a_d(\n)}{d}V_{1/2}\left(\frac{4^n\pi^{2n}\norm(\nu\a)d^2}{\norm(\n\Dif_F^2)}\right)\\
&\hspace{.5in} \times \sum_{c\in\c^{-1}\backslash\{0\}/\ringO_{F}^{\times +}}\sum_{\substack{\epsilon\in\ringO_{F}^{\times+}/\ringO_{F}^{\times2} \\ \eta\in\ringO_{F}^{\times+}}}\frac{{\mathit Kl}(\epsilon\nu,\a; \xi,\b;c\eta,\c)}{\norm(c\c)}J_{k-1}\left(\frac{4\pi\sqrt{\epsilon\nu\xi\left[\a\b\c^{-2}\right]}}{\eta|c|}\right). \nonumber
\end{align} 
Notice here that we are also fixing a narrow class representative $\c$ such that $\c^{2}\sim\a\b$.
Interchanging the summations and applying some algebraic manipulations, we see that
\begin{eqnarray*}
E_{\g,\p,\a}(k)&=& \sum_{d=1}^{\infty}\frac{a_d(\n)}{d}\sum_{\nu\in(\a^{-1})^{+}/ \ringO_{F}^{\times +}} \frac{C_{\g}(\nu\a)}{\sqrt{\norm(\nu\a)}}V_{1/2}\left(\frac{4^n\pi^{2n}\norm(\nu\a)d^2}{\norm(\n\Dif_F^2)}\right)\\
&& \hspace{.5in} \times \sum_{c\in\c^{-1}\backslash\{0\}/\ringO_{F}^{\times +}}\sum_{\substack{\epsilon\in\ringO_{F}^{\times+}/\ringO_{F}^{\times2} \\ \eta\in\ringO_{F}^{\times+}}}\frac{{\mathit Kl}(\epsilon\eta^{-2}\nu,\a; \xi,\b;c,\c)}{\norm(c\c)}J_{k-1}\left(\frac{4\pi\sqrt{\epsilon\eta^{-2}\nu\xi\left[\a\b\c^{-2}\right]}}{|c|}\right).
\end{eqnarray*}
Replacing the sums over $\epsilon$ and $\eta$ by the sum over all totally positive units, we obtain 
\begin{eqnarray*}
E_{\g,\p,\a}(k)&=&\sum_{d=1}^{\infty}\frac{a_d(\n)}{d}\sum_{\nu\in(\a^{-1})^{+}/ \ringO_{F}^{\times +}} \frac{C_{\g}(\nu\a)}{\sqrt{\norm(\nu\a)}}V_{1/2}\left(\frac{4^n\pi^{2n}\norm(\nu\a)d^2}{\norm(\n\Dif_F^2)}\right)\\
&& \hspace{.5in} \times \sum_{c\in\c^{-1}\backslash\{0\}/\ringO_{F}^{\times +}}\sum_{\eta\in\ringO_{F}^{\times+}}\frac{{\mathit Kl}(\eta\nu,\a; \xi,\b;c,\c)}{\norm(c\c)}J_{k-1}\left(\frac{4\pi\sqrt{\eta\nu\xi\left[\a\b\c^{-2}\right]}}{|c|}\right) \\
&=&\sum_{d=1}^{\infty}\frac{a_d(\n)}{d}\sum_{\nu\in(\a^{-1})^{+}} \frac{C_{\g}(\nu\a)}{\sqrt{\norm(\nu\a)}}V_{1/2}\left(\frac{4^n\pi^{2n}\norm(\nu\a)d^2}{\norm(\n\Dif_F^2)}\right)\\
&& \hspace{.5in} \times \sum_{c\in\c^{-1}\backslash\{0\}/\ringO_{F}^{\times +}}\frac{{\mathit Kl}(\nu,\a; \xi,\b;c,\c)}{\norm(c\c)}J_{k-1}\left(\frac{4\pi\sqrt{\nu\xi\left[\a\b\c^{-2}\right]}}{|c|}\right),
\end{eqnarray*}
where the last equality follows from unfolding the sum over $(\a^{-1})^+/\ringO_F^{\times +}$ to a sum over $(\a^{-1})^+$. 
In what follows, we shall open up the Kloosterman sum ${\mathit Kl}(\nu,\a; \xi,\b;c,\c)$ while also replacing the terms $V_{1/2}( *)$ and $J_{k-1}(*)$ by their integral representations given in (\ref{eqn:v}) and (\ref{eqn:j-bessel}). 
More precisely, we write $E_{\g,\p,\a}(k)$ as
\begin{eqnarray*} 
E_{\g,\p,\a}(k)&=&\sum_{d=1}^{\infty}\frac{a_d(\n)}{d}\sum_{\nu\in(\a^{-1})^{+}}\frac{C_{\g}(\nu\a)}{\sqrt{\norm(\nu\a)}}\int_{(3/2)}\left(\frac{(2\pi)^{2n}\norm(\nu\a)d^2}{\norm(\n\Dif_F^2)}\right)^{-u}\gamma\left(\frac12, u\right)\frac{e^{u^{2}}}{u}\\
&&\times\sum_{c\in\c^{-1}\backslash\{0\}/\ringO_{F}^{\times +}}\norm(c\c)^{-1}\sum_{x\in \left(\a\Dif_{F}^{-1}\c^{-1}/\a\Dif_{F}^{-1}c\right)^{\times}}\exp\left(2\pi i\trace\left(\frac{\nu x}{c}\right)\right)\exp\left(2\pi i\trace\left(\frac{\xi\overline{x}}{c}\right)\right)\\
&&\times\int_{(\sigma)}\frac{\Gamma\left(\frac{\bk-1-s}{2}\right)}{\Gamma\left(\frac{\bk-1+s}{2}+1\right)}\left(\frac{2\pi\sqrt{\nu\xi[\a\b\c^{-2}]}}{|c|}\right)^{s}\;dsdu.
\end{eqnarray*}
It should be noted that multi-index notation is applied again in the integral representation of the $J$-Bessel function. Indeed, 
$\displaystyle{\int_{(\sigma)} \; ds}$ denotes the multiple integration $\displaystyle{\int_{(\sigma_1)}\cdots\int_{(\sigma_n)}}\; ds_n\cdots ds_1$.

Upon interchanging summations and integration, we get 
\begin{align*} 
E_{\g, \p,\a}(k)&=\int_{(3/2)}(2\pi)^{-2nu}\norm(\a)^{-u}\norm(\n\Dif_F^2)^{u}\gamma\left(\frac12,u\right)\frac{e^{u^{2}}}{u}\sum_{d=1}^{\infty}\frac{a_d(\n)}{d^{1+2u}}\\
&\hspace{.5in}\times\sum_{c\in\c^{-1}\backslash\{0\}/\ringO_{F}^{\times +}}\norm(c\c)^{-1}\sum_{x\in \left(\a\Dif_{F}^{-1}\c^{-1}/\a\Dif_{F}^{-1}c\right)^{\times}}\exp\left(2\pi i\trace\left(\frac{\xi\overline{x}}{c}\right)\right)\\
&\hspace{0.5in}\times\sum_{\nu\in(\a^{-1})^{+}}\frac{C_{\g}(\nu\a)}{\sqrt{\norm(\nu\a)}}\exp\left(2\pi i\trace\left(\frac{\nu x}{c}\right)\right)\\
&\hspace{.5in}\times \int_{(\sigma)}\frac{\Gamma\left(\frac{\bk-1-s}{2}\right)}{\Gamma\left(\frac{\bk-1+s}{2}+1\right)}\nu^{\frac{s}{2}-u}|c|^{-s}\xi^{\frac{s}{2}}[\a\b\c^{-2}]^{\frac{s}{2}}(2\pi)^{s}\;dsdu.
\end{align*}
For each $c\in\c^{-1}\backslash\{0\}/\ringO_{F}^{\times}$ and  $x\in \left(\a\Dif_{F}^{-1}\c^{-1}/\a\Dif_{F}^{-1}c\right)^{\times}$ in the above expression, we now focus on the inner sum over $\nu\in(\a^{-1})^+$, which we denote by $S(c,x,u)$. Using the relation between the (ad\`elic) Fourier coefficients $C_{\g}(\nu\a)$ and the (classical) Fourier coefficients $a_{\nu}(g_{\a})$ given in (\ref{eqn:classical-fourier-coefficients}), we can write
\[S(c,x,u)=\sum_{\nu\in(\a^{-1})^{+}}a_{\nu}(g_{\a})\exp\left(2\pi i\trace\left(\frac{\nu x}{c}\right)\right)\int_{(\sigma)}\frac{\Gamma\left(\frac{\bk-1-s}{2}\right)}{\Gamma\left(\frac{\bk-1+s}{2}+1\right)}\nu^{-\frac{l}{2}+\frac{s}{2}-u}|c|^{-s}\xi^{\frac{s}{2}}[\a\b\c^{-2}]^{\frac{s}{2}}(2\pi)^{s}\;ds.
\]

Before we proceed further, we define the classical Hilbert modular form $g_\a^{x,c}=(g_\a|\!|_l\alpha_{x,c})$ where $\alpha_{x,c}=\left[ {\begin{array}{cc}
   x & b \\
   c & \overline{x} \\
  \end{array} } \right]\in \SL_{2}(F)$ with $b$ being an element in $\c$ such that $x\overline{x}=1+bc$. 
This function has a Fourier expansion 
\[g_{\a}^{x,c}(z)=\sum_{\nu\in\left(\L_{\a}^{x,c}\right)^{+}} a_{\nu}(g_{\a}^{x,c})\exp\left(2\pi i\trace(\nu z)\right)\]
for some lattice $\L_{\a}^{x,c}$ in $F$. Moreover, it satisfies the following relation. 
 \begin{lemma}\label{lem:transformation}For all $y\in(\mathbb{R}^{+})^{n}$, we have
$\dis g_{\a}^{x,c}\left(\frac{i}{yc}-\frac{\overline{x}}{c}\right)=(iy)^{\bl}g_{\a}\left(\frac{iy}{c}+\frac{x}{c}\right).$
\end{lemma}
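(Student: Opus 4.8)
The plan is to recognize the claimed identity as simply the defining slash relation $g_\a^{x,c}=g_\a|\!|_l\alpha_{x,c}$ read off at the single point $w:=\frac{i}{yc}-\frac{\overline{x}}{c}$, after which everything reduces to two short componentwise computations. Since $\alpha_{x,c}$ has determinant $x\overline{x}-bc=1$ (by the defining relation $x\overline{x}=1+bc$) and bottom row $(c,\overline{x})$, the weight-$\bl$ slash action collapses to
$$g_\a^{x,c}(w)=j(\alpha_{x,c},w)^{-\bl}\,g_\a(\alpha_{x,c}w)=(cw+\overline{x})^{-\bl}\,g_\a\!\left(\frac{xw+b}{cw+\overline{x}}\right),$$
where, as throughout the paper, the powers and the Möbius action are taken componentwise through the embeddings $\sigma_j$. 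It therefore suffices to evaluate the automorphy factor $cw+\overline{x}$ and the image $\alpha_{x,c}w$ at $w=\frac{i}{yc}-\frac{\overline{x}}{c}$.

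For the automorphy factor, I would compute directly
$$cw+\overline{x}=c\left(\frac{i}{yc}-\frac{\overline{x}}{c}\right)+\overline{x}=\frac{i}{y},$$
so that $(cw+\overline{x})^{-\bl}=(i/y)^{-\bl}$. The one point needing care is to match this with the factor $(iy)^{\bl}$ in the statement: in multi-index notation $(i/y)^{-\bl}=\prod_j y_j^{l_j}i^{-l_j}$ while $(iy)^{\bl}=\prod_j i^{l_j}y_j^{l_j}$, and these agree because each $l_j$ is even, so that $i^{-l_j}=i^{l_j}\in\{\pm1\}$. This is precisely where the hypothesis $\bl\in 2\mathbb{N}^n$ is used.

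For the Möbius image, I would invoke the determinant-one relation once more: since $x\overline{x}=1+bc$ gives $x\overline{x}/c=1/c+b$, the numerator simplifies to
$$xw+b=\frac{ix}{yc}-\frac{x\overline{x}}{c}+b=\frac{ix}{yc}-\frac{1}{c},$$
and dividing by $cw+\overline{x}=i/y$ (using $1/i=-i$) gives $\alpha_{x,c}w=\frac{x}{c}+\frac{iy}{c}$, which is exactly the argument $\frac{iy}{c}+\frac{x}{c}$ on the right-hand side. Combining the two evaluations yields the lemma. I do not expect a genuine obstacle here, as the content is a mechanical unwinding of the slash operator; the two mild points to keep honest are the multi-index bookkeeping of the $i$-powers (resolved by the evenness of $\bl$) and the remark that, since $\alpha_{x,c}$ lands in $\SL_2(\R)^n$ under the embeddings, the points $w$ and $\alpha_{x,c}w$ lie in $\mathbb{H}^n$ simultaneously (both precisely when $c\gg0$), so that the two sides are consistent evaluations of the respective forms.
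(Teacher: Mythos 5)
Your proof is correct and follows exactly the paper's route: the paper also just substitutes $z=\frac{i}{yc}-\frac{\overline{x}}{c}$ into the slash identity $g_{\a}^{x,c}(z)=(cz+\overline{x})^{-\bl}g_{\a}\bigl(\frac{xz+b}{cz+\overline{x}}\bigr)$ and calls it immediate. You have merely written out the two componentwise evaluations (and the $i^{-l_j}=i^{l_j}$ bookkeeping via $\bl\in 2\mathbb{N}^n$) that the paper leaves implicit.
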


\noindent{\it Proof.}
\pushQED{\qed} 
This follows immediately upon taking $z=\frac{i}{yc}-\frac{\overline{x}}{c}$ in the identity 
\[g_{\a}^{x,c}(z)=(cz+\overline{x})^{-\bl}g_{\a}\left(\frac{xz+b}{cz+\overline{x}}\right). \hfill \qedhere
\popQED \]

Now that we have this notation set up, we go back to the computations on $S(c,x,u)$. Inserting the definition of the gamma function at $l/2-s/2+u$ gives
\begin{align*}
S(c,x,u)&=\sum_{\nu\in(\a^{-1})^{+}}a_{\nu}(g_{\a})\exp\left(2\pi i\trace\left(\frac{\nu x}{c}\right)\right)\int_{(\sigma)}\frac{|c|^{-s}\xi^{\frac{s}{2}}[\a\b\c^{-2}]^{\frac{s}{2}}(2\pi)^{s}\Gamma\left(\frac{\bk-1-s}{2}\right)}{\Gamma\left(\frac{\bk-1+s}{2}+1\right)\Gamma\left(\frac{\bl}{2}-\frac{s}{2}+u\right)}\left(\frac{c}{2\pi}\right)^{-\frac{\bl}{2}+\frac{s}{2}-u}\\
&\hspace{0.5in}\times\int_{F_{\infty}^{\times +}}y^{\frac{\bl}{2}-\frac{s}{2}+u-1}\exp\left(-2\pi\trace\left(\frac{\nu y}{c}\right)\right)\;dy\;ds\\
&=\int_{F_{\infty}^{\times+}}\sum_{\nu\in(\a^{-1})^{+}}a_{\nu}(g_{\a})\exp\left(-2\pi\trace\left(\frac{\nu y}{c}\right)\right)\exp\left(2\pi i\trace\left(\frac{\nu x}{c}\right)\right)\\
&\hspace{0.5in}\times\int_{(\sigma)}\frac{|c|^{-s}\xi^{\frac{s}{2}}[\a\b\c^{-2}]^{\frac{s}{2}}(2\pi)^{s}\Gamma\left(\frac{\bk-1-s}{2}\right)}{\Gamma\left(\frac{\bk-1+s}{2}+1\right)\Gamma\left(\frac{\bl}{2}-\frac{s}{2}+u\right)}\left(\frac{c}{2\pi}\right)^{-\frac{\bl}{2}+\frac{s}{2}-u}y^{\frac{\bl}{2}-\frac{s}{2}+u-1}\;ds\;dy.
\end{align*}
Notice that the sum over $\nu$ is nothing but the Fourier expansion of $g_\a$ at $z=\frac{x}{c}+i\frac{y}{c}$, and therefore we have 
\begin{align*}
S(c,x,u)&=\int_{F_{\infty}^{\times+}}g_{\a}\left(\frac{x}{c}+i\frac{y}{c}\right)\int_{(\sigma)}\frac{|c|^{-s}\xi^{\frac{s}{2}}[\a\b\c^{-2}]^{\frac{s}{2}}(2\pi)^{s}\Gamma\left(\frac{\bk-1-s}{2}\right)}{\Gamma\left(\frac{\bk-1+s}{2}+1\right)\Gamma\left(\frac{\bl}{2}-\frac{s}{2}+u\right)}\left(\frac{c}{2\pi}\right)^{-\frac{\bl}{2}+\frac{s}{2}-u}y^{\frac{\bl}{2}-\frac{s}{2}+u-1}\;ds\;dy\\
&=i^{-\bl}\int_{F_{\infty}^{\times+}}g_{\a}^{x,c}\left(\frac{i}{yc}-\frac{\overline{x}}{c}\right)\int_{(\sigma)}\frac{|c|^{-s}\xi^{\frac{s}{2}}[\a\b\c^{-2}]^{\frac{s}{2}}(2\pi)^{s}\Gamma\left(\frac{\bk-1-s}{2}\right)}{\Gamma\left(\frac{\bk-1+s}{2}+1\right)\Gamma\left(\frac{\bl}{2}-\frac{s}{2}+u\right)}\\
&\hspace{0.5in}\times\left(\frac{c}{2\pi}\right)^{-\frac{\bl}{2}+\frac{s}{2}-u}y^{-\frac{\bl}{2}-\frac{s}{2}+u-1}\;ds\;dy, 
\end{align*}
where the last equality follows from Lemma \ref{lem:transformation}. Hence, we get 
\begin{align*}
S(c,x,u)&=i^{-\bl}\int_{F_{\infty}^{\times+}}\sum_{\nu\in\left(\L_{\a}^{x,c}\right)^{+}} a_{\nu}(g_{\a}^{x,c})\exp\left(2\pi i\trace\left(-\frac{\nu\overline{x}}{c}\right)\right)\exp\left(-2\pi \trace\left(\frac{\nu}{yc}\right)\right)\\
&\hspace{0.5in}\times\int_{(\sigma)}\frac{|c|^{-s}\xi^{\frac{s}{2}}[\a\b\c^{-2}]^{\frac{s}{2}}(2\pi)^{s}\Gamma\left(\frac{\bk-1-s}{2}\right)}{\Gamma\left(\frac{\bk-1+s}{2}+1\right)\Gamma\left(\frac{\bl}{2}-\frac{s}{2}+u\right)}\left(\frac{c}{2\pi}\right)^{-\frac{\bl}{2}+\frac{s}{2}-u}y^{-\frac{\bl}{2}-\frac{s}{2}+u-1}\;ds\;dy\\
&=i^{-\bl}\sum_{\nu\in\left(\L_{\a}^{x,c}\right)^{+}} a_{\nu}(g_{\a}^{x,c})\exp\left(2\pi i\trace\left(-\frac{\nu\overline{x}}{c}\right)\right)\int_{(\sigma)}\frac{|c|^{-s}\xi^{\frac{s}{2}}[\a\b\c^{-2}]^{\frac{s}{2}}(2\pi)^{s}\Gamma\left(\frac{\bk-1-s}{2}\right)}{\Gamma\left(\frac{\bk-1+s}{2}+1\right)\Gamma\left(\frac{\bl}{2}-\frac{s}{2}+u\right)}\\
&\hspace{0.5in}\times\left(\frac{c}{2\pi}\right)^{-\frac{\bl}{2}+\frac{s}{2}-u}\int_{F_{\infty}^{\times+}}y^{-\frac{\bl}{2}-\frac{s}{2}+u-1}\exp\left(-2\pi \trace\left(\frac{\nu}{yc}\right)\right)\;dy\;ds.
\end{align*}
Applying the integral definition of the gamma function once again results in
\begin{align*}
S(c, x, u)&=i^{-\bl}\sum_{\nu\in\left(\L_{\a}^{x,c}\right)^{+}} a_{\nu}(g_{\a}^{x,c})\exp\left(2\pi i\trace\left(-\frac{\nu\overline{x}}{c}\right)\right)\\
&\hspace{.2in}\times\int_{(\sigma)}\frac{|c|^{-s}\xi^{\frac{s}{2}}[\a\b\c^{-2}]^{\frac{s}{2}}(2\pi)^{s}\Gamma\left(\frac{\bk-1-s}{2}\right)\Gamma\left(\frac{\bl}{2}+\frac{s}{2}-u\right)}{\Gamma\left(\frac{\bk-1+s}{2}+1\right)\Gamma\left(\frac{\bl}{2}-\frac{s}{2}+u\right)}\left(\frac{c}{2\pi}\right)^{s-2u}\nu^{-\frac{\bl}{2}-\frac{s}{2}+u}\;ds.
\end{align*}
By folding the $\nu$-sum into a sum over $\left(\L_{\a}^{x,c}\right)^{+}/\ringO_{F}^{\times+}$, which simply amounts to re-introducing the $\eta$-sum, we can write $S(c,x,u)$ as 
\begin{align*}
S(c,x,u)&=i^{-\bl}(2\pi)^{2u}\sum_{\eta\in\ringO_{F}^{\times+}}\int_{(\sigma)}\frac{|c|^{-s}\xi^{\frac{s}{2}}[\a\b\c^{-2}]^{\frac{s}{2}}\Gamma\left(\frac{\bk-1-s}{2}\right)\Gamma\left(\frac{\bl}{2}+\frac{s}{2}-u\right)}{\Gamma\left(\frac{\bk-1+s}{2}+1\right)\Gamma\left(\frac{\bl}{2}-\frac{s}{2}+u\right)}c^{s-2u}\\
&\hspace{.5in}\times\sum_{\nu\in\left(\L_{\a}^{x,c}\right)^{+}/\ringO_{F}^{\times+}} a_{\nu\eta}(g_{\a}^{x,c})(\nu\eta)^{-\frac{\bl}{2}-\frac{s}{2}+u}\exp\left(2\pi i\trace\left(-\frac{\nu\eta\overline{x}}{c}\right)\right)\;ds\\
&=i^{-\bl}(2\pi)^{2u}\sum_{\eta\in\ringO_{F}^{\times+}}\int_{(\sigma)}\frac{|c|^{-s}\xi^{\frac{s}{2}}[\a\b\c^{-2}]^{\frac{s}{2}}\Gamma\left(\frac{\bk-1-s}{2}\right)\Gamma\left(\frac{\bl}{2}+\frac{s}{2}-u\right)}{\Gamma\left(\frac{\bk-1+s}{2}+1\right)\Gamma\left(\frac{\bl}{2}-\frac{s}{2}+u\right)}c^{s-2u}\eta^{-\frac{s}{2}}\\
&\hspace{.5in}\times\sum_{\nu\in\left(\L_{\a}^{x,c}\right)^{+}/\ringO_{F}^{\times+}} a_{\nu}(g_{\a}^{x,c})\nu^{-\frac{l}{2}-\frac{s}{2}+u}\exp\left(2\pi i\trace\left(-\frac{\nu\eta\overline{x}}{c}\right)\right)\;ds.
\end{align*} 
 We move the line of integration in $s$ to $\Re(s_{j})=6+{\lambda_{j}}$ with certain choices of $\lambda_{j}\geq0$. Indeed, if $\eta_{j}\leq1$, we choose $\lambda_{j}=0$, and if $\eta_{j}>1$, we choose $\lambda_{j}=\lambda_0$ for some fixed positive constant $\lambda_0$. Such choices of $\lambda_{j}$ are made to ensure that the sum over $\eta$ in the error term is absolutely convergent. This is mainly guaranteed by virtue of the crucial fact (see Luo \cite[page~136]
{luo1})
\begin{equation}\label{eqn:luo}
\sum_{\eta\in\ringO_{F}^{\times+}}\prod_{|\eta_j|>1}|\eta_j|^{-\lambda_0}<\infty.\end{equation}  
Upon substituting $S(c,x,u)$ back in $E_{\g,\p,\a}(k)$, we get
\begin{align*}E_{\g,\p,\a}(k)&=i^{-l}\int_{(3/2)}\norm(\c)^{-1}\norm(\a)^{-u}\norm(\n\Dif_F^2)^{u}\gamma\left(\frac12,u\right)\frac{e^{u^{2}}}{u}\sum_{d=1}^{\infty}\frac{a_d(\n)}{d^{1+2u}}\\
&\hspace{.5in}\times\sum_{c\in\c^{-1}\backslash\{0\}/\ringO_{F}^{\times +}}\sum_{x\in \left(\a\Dif_{F}^{-1}\c^{-1}/\a\Dif_{F}^{-1}c\right)^{\times}}\exp\left(2\pi i\trace\left(\frac{\xi\overline{x}}{c}\right)\right)\\
&\hspace{0.5in}\times \sum_{\eta\in\ringO_{F}^{\times+}}\int_{(\sigma)}\frac{|c|^{-s-1}\xi^{\frac{s}{2}}[\a\b\c^{-2}]^{\frac{s}{2}}\Gamma\left(\frac{\bk-1-s}{2}\right)\Gamma\left(\frac{\bl}{2}+\frac{s}{2}-u\right)}{\Gamma\left(\frac{\bk-1+s}{2}+1\right)\Gamma\left(\frac{\bl}{2}-\frac{s}{2}+u\right)}c^{s-2u}\eta^{-\frac{s}{2}}\\
&\hspace{.5in}\times\sum_{\nu\in\left(\L_{\a}^{x,c}\right)^{+}/\ringO_{F}^{\times+}} a_{\nu}(g_{\a}^{x,c})\nu^{-\frac{l}{2}-\frac{s}{2}+u}\exp\left(2\pi i\trace\left(-\frac{\nu\eta\overline{x}}{c}\right)\right)\;ds\;du.\end{align*}
 Interchanging summation and integration once more allows us to write
\begin{align*}\label{eqn:error_a}
E_{\g,\p,\a}(k)&=i^{-\bl}\int_{(3/2)}\norm(\c)^{-1}\norm(\a)^{-u}\norm(\n\Dif_F^2)^{u}\gamma\left(\frac12, u\right)\frac{e^{u^{2}}}{u}\sum_{d=1}^{\infty}\frac{a_d(\n)}{d^{1+2u}}\\
&\hspace{0.5in}\times \sum_{\eta\in\ringO_{F}^{\times+}}\int_{(\sigma)}\frac{\xi^{\frac{s}{2}}[\a\b\c^{-2}]^{\frac{s}{2}}\Gamma\left(\frac{\bk-1-s}{2}\right)\Gamma\left(\frac{\bl}{2}+\frac{s}{2}-u\right)}{\Gamma\left(\frac{\bk-1+s}{2}+1\right)\Gamma\left(\frac{\bl}{2}-\frac{s}{2}+u\right)}\eta^{-\frac{s}{2}}\nonumber \\
&\hspace{.5in}\times\sum_{c\in\c^{-1}\backslash\{0\}/\ringO_{F}^{\times +}}\sum_{x\in \left(\a\Dif_{F}^{-1}\c^{-1}/\a\Dif_{F}^{-1}c\right)^{\times}}|c|^{-s-1}c^{s-2u}\nonumber \\
&\hspace{.5in}\times\sum_{\nu\in\left(\L_{\a}^{x,c}\right)^{+}/\ringO_{F}^{\times+}} a_{\nu}(g_{\a}^{x,c})\nu^{-\frac{l}{2}-\frac{s}{2}+u}\exp\left(2\pi i\trace\left(\frac{(\xi-\nu\eta)\overline{x}}{c}\right)\right)\;ds\;du. \nonumber
\end{align*}
Let us now examine the multiple sum (over $c$, $x$, and $\nu$),
 \begin{align*}S(u,s)&=\sum_{c\in\c^{-1}\backslash\{0\}/\ringO_{F}^{\times +}}\sum_{x\in \left(\a\Dif_{F}^{-1}\c^{-1}/\a\Dif_{F}^{-1}c\right)^{\times}}|c|^{-s-1}c^{s-2u}\\
 &\hspace{.5in}\times\sum_{\nu\in\left(\L_{\a}^{x,c}\right)^{+}/\ringO_{F}^{\times+}} a_{\nu}(g_{\a}^{x,c})\nu^{-\frac{l}{2}-\frac{s}{2}+u}\exp\left(2\pi i\trace\left(\frac{(\xi-\nu\eta)\overline{x}}{c}\right)\right).
 \end{align*}
 
\begin{lemma} 
Let $u$ and $s$ be as above. Then, $S(u,s)$ is absolutely convergent.
\end{lemma}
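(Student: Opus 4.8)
The plan is to establish absolute convergence by bounding the sum of the moduli of the summands. The first observation is that the additive character $\exp\left(2\pi i\trace\left(\frac{(\xi-\nu\eta)\overline{x}}{c}\right)\right)$ has modulus one, so it plays no role in this lemma; in particular the auxiliary unit $\eta$ (whose own sum is controlled separately via (\ref{eqn:luo})) is irrelevant here. Thus it suffices to estimate
\[\sum_{c}\sum_{x}\Bigl|\,|c|^{-s-1}c^{s-2u}\Bigr|\sum_{\nu}\bigl|a_{\nu}(g_{\a}^{x,c})\bigr|\,\nu^{-\frac{\bl}{2}-\frac{\Re(s)}{2}+\Re(u)},\]
where $\Re(s)$ is taken componentwise and the three sums range as in the definition of $S(u,s)$. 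The factors depending on $s$ in the archimedean weight collapse: a direct computation gives $\bigl|\,|c|^{-s-1}c^{s-2u}\bigr|=\norm(c)^{-1-2\Re(u)}=\norm(c)^{-4}$, since $\Re(u)=3/2$ on the contour, and the dependence on $\Re(s)$ cancels identically.

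The crucial uniform input is a Hecke-type bound for the transformed forms $g_{\a}^{x,c}=g_{\a}|\!|_{\bl}\alpha_{x,c}$ that is \emph{independent} of $x$ and $c$. Since $\alpha_{x,c}\in\SL_{2}(F)\subset\GL_{2}^{+}(F_{\infty})$ and the Petersson sup-norm $\sup_{z}\prod_{j=1}^{n}(\Im z_{j})^{l_{j}/2}|h(z)|$ is invariant under $|\!|_{\bl}$ by elements of $\GL_{2}^{+}(F_{\infty})$, this sup-norm of $g_{\a}^{x,c}$ equals that of $g_{\a}$, a finite constant $B$ depending only on $\g$. The standard optimization of the Fourier integral of $g_{\a}^{x,c}$ over its period lattice then yields $\bigl|a_{\nu}(g_{\a}^{x,c})\bigr|\ll_{\g}\nu^{\bl/2}$, uniformly in $x$, $c$, and $\nu$, the covolume of the period lattice cancelling in the estimate. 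I will furthermore fix, once and for all, a fundamental domain for the action of $\ringO_{F}^{\times+}$ on $F_{\infty}^{+}$ and choose the representatives $\nu$ (and likewise $c$) inside it; by Dirichlet's unit theorem this confines the shape of $\nu$ to a compact set, so that $\nu_{j}\asymp\norm(\nu)^{1/n}$ for every $j$, with implied constants depending only on $F$.

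With these ingredients the estimate becomes routine. Combining $\bigl|a_{\nu}(g_{\a}^{x,c})\bigr|\ll\nu^{\bl/2}$ with the shape confinement, the inner summand has size $\ll\norm(\nu)^{-\frac1n\sum_{j}(\Re(s_{j})/2-\Re(u))}\le\norm(\nu)^{-3/2}$, using $\Re(s_{j})\ge 6$ and $\Re(u)=3/2$; since standard lattice-point counting gives $O(X)$ admissible $\nu$ with $\norm(\nu)\le X$ (uniformly in $x,c$, once one checks that the covolume of $\L_{\a}^{x,c}$ stays bounded below), the $\nu$-sum is $O(1)$ uniformly. The $x$-sum contributes at most $\norm(c\c)\ll_{\c}\norm(c)$ terms, so altogether $S(u,s)\ll\sum_{c}\norm(c)^{-4}\cdot\norm(c)=\sum_{c}\norm(c)^{-3}$, a sum over $c\in\c^{-1}\backslash\{0\}/\ringO_{F}^{\times+}$ which converges because the exponent $3$ exceeds $1$. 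The main obstacle is not any single estimate but the fact, flagged in the introduction, that the summand is \emph{not} invariant under $\ringO_{F}^{\times+}$: the power $\nu^{-\bl/2-s/2+u}$ fails to cancel against the Hecke normalization $\nu^{\bl/2}$. Consequently the value of the coset sum depends on the choice of representatives, and genuine care is needed both in fixing the fundamental domain (to secure the shape confinement above) and in proving the Hecke bound uniformly across the infinite family $\{g_{\a}^{x,c}\}$; these two points are precisely where the infinitude of the totally positive unit group renders the present argument more delicate than its level-aspect counterpart.
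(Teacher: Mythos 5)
Your replacement for the coefficient bound is sound and is a genuinely different route from the paper's: since the Petersson sup-norm $\sup_z(\Im z)^{\bl/2}|h(z)|$ is invariant under $|\!|_{\bl}$ by elements of $\GL_2^{+}(F_\infty)$ and is finite for the cusp form $g_\a$, the Hecke argument does give $|a_\nu(g_\a^{x,c})|\ll_{\g}\nu^{\bl/2}$ uniformly in $x$ and $c$. The paper instead writes $\alpha_{x,c}=\gamma\delta_i\alpha$ with $\gamma\in\Gamma_0(\n,\a\Dif_F^{-1})$, with $\delta_i$ ranging over a finite set of coset representatives of $\Gamma_0(\n,\a\Dif_F^{-1})$ in $\Gamma_0(\ringO_F,\a\Dif_F^{-1})$, and with $\alpha=\alpha_{x_0,c_0}$ fixed, so that $g_\a^{x,c}=g_\a^{\delta_i}|\!|_{\bl}\alpha$ is literally one of finitely many functions; uniformity of the crude bound $a_\nu=O(\nu^{\bl/2})$ is then automatic.

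There is, however, a gap at the point you defer with ``once one checks that the covolume of $\L_\a^{x,c}$ stays bounded below.'' That check is the crux of the lemma, not a routine verification. The lattice $\L_\a^{x,c}$ is dual to the lattice of periods $t$ satisfying $\alpha_{x,c}\left[\begin{smallmatrix}1&t\\0&1\end{smallmatrix}\right]\alpha_{x,c}^{-1}=\left[\begin{smallmatrix}1-cxt&x^{2}t\\-c^{2}t&1+cxt\end{smallmatrix}\right]\in\Gamma_0(\n,\a\Dif_F^{-1})$, and the condition $c^{2}t\in\n\a^{-1}\Dif_F$ suggests, naively, that the period lattice shrinks and hence $\L_\a^{x,c}$ densifies as $\norm(c)\to\infty$. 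If that happened, your $\nu$-sum would grow like a power of $\norm(c)$ and overwhelm the factor $\norm(c)^{-3}$ you have left over, so absolute convergence would not follow. The paper's coset decomposition is exactly what rules this out: because $g_\a^{x,c}=g_\a^{\delta_i}|\!|_{\bl}\alpha$ with $\alpha$ fixed and $\delta_i$ in a finite set, the lattice $\L_\a^{x,c}$ takes only finitely many values, and the $\nu$-sum is bounded uniformly in $x$ and $c$. You should either import that finite-index argument (at which point your sup-norm bound becomes redundant, since the crude Hecke bound applied to the finitely many functions $g_\a^{\delta_i}|\!|_{\bl}\alpha$ already suffices) or carry out the explicit computation of the period lattice using the coprimality constraints on $x$ and $c$, which amounts to the same group-theoretic fact. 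On the other hand, your observation that the summand is not invariant under $\ringO_F^{\times+}$, so that one must fix representatives in a fundamental domain (giving $\nu_j\asymp\norm(\nu)^{1/n}$) before estimating, is correct and addresses a point the paper passes over silently.
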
 
\begin{proof}
Let us fix a pair $(c_0,x_0)$ with 
$c_0\in\c^{-1}\backslash\{0\}/\ringO_{F}^{\times}$ and  $x_0\in\left(\a\Dif_{F}^{-1}\c^{-1}/\a\Dif_{F}^{-1}c\right)^{\times}$. This gives rise to the fixed matrix $\alpha:=\alpha_{x_0,c_0}\in \SL_{2}(F)$. In fact, it can be verified that $\alpha_{x,c}\alpha^{-1}$ is in $\Gamma_0(\ringO_{F},\a\Dif_{F}^{-1})$ for any $c\in\c^{-1}\backslash\{0\}/\ringO_{F}^{\times}$ and $x\in\left(\a\Dif_{F}^{-1}\c^{-1}/\a\Dif_{F}^{-1}c\right)^{\times}$. Notice that since the congruence subgroup $\Gamma_0(\n,\a\Dif_{F}^{-1})$ has finite index in $\Gamma_0(\ringO_{F},\a\Dif_{F}^{-1})$, we have 
$$\Gamma_0(\ringO_{F},\a\Dif_{F}^{-1})=\cup_{i=1}^{r}\Gamma_0(\n,\a\Dif_{F}^{-1})\delta_{i},$$ 
for some finite number $r$ and matrices $\delta_{1},\cdots,\delta_{r}\in\Gamma_0(\ringO_{F},\a\Dif_{F}^{-1})$. It follows that there exist $\gamma\in\Gamma_0(\n,\a\Dif_{F}^{-1})$ and $\delta\in\{\delta_{1},\cdots,\delta_{r}\}$ such that $\alpha_{x,c}=\gamma\delta\alpha$. Hence, we have 
$$g_{\a}^{x,c}=g_{\a}|\!|_{\bl}{\alpha_{x,c}}=g_{\a}|\!|_{\bl}{\gamma\delta\alpha}=g_{\a}|\!|_{\bl}{\delta\alpha}.$$ 
If we set $g_{\a}^{\delta}:=g_{\a}|\!|_{\bl}{\delta}$, we get $g_{\a}^{x,c}=g_{\a}^{\delta}|\!|_{\bl}{\alpha}$. Therefore, 
\begin{align*}
S(u,s)&\ll\sum_{c\in\c^{-1}\backslash\{0\}/\ringO_{F}^{\times +}}\sum_{x\in \left(\a\Dif_{F}^{-1}\c^{-1}/\a\Dif_{F}^{-1}c\right)^{\times}}|c|^{-2\Re(u)-1}\sum_{\nu\in\left(\L_{\a}^{x,c}\right)^{+}/\ringO_{F}^{\times +}}\sum_{i=1}^{r} \left|a_{\nu}\left(g_{\a}^{\delta_{i}}|\!|_{\bl} \alpha\right)\right|\nu^{-\frac{l}{2}-\frac{\Re(s)}{2}+\Re(u)}\\
 &\ll\sum_{c\in\c^{-1}\backslash\{0\}/\ringO_{F}^{\times +}}|c|^{-3}\sum_{\nu\in\left(\L_{\a}^{x,c}\right)^{+}/\ringO_{F}^{\times+}}\nu^{-\frac{3}{2}}\\
 &\ll1.  
 \end{align*}
In the above inequality, we applied the crude estimate $a_\nu=O(\nu^{l/2})$.
\end{proof}
We write $u=3/2+iv$, $s_{j}=6+\lambda_{j}+it_{j}$. For ease of notation, we put $dt=dt_{1}\cdots dt_{n}$ and $\lambda=(\lambda_1,\dots,\lambda_n)$. Recall that we set $G(u)=e^{u^2}$ in which case $|G(u)|=e^{\Re(u^2)}\ll e^{-v^2}$.
Hence, 
\begin{align*}
E_{\g,\p,\a}(k)&\ll\int_{-\infty}^{\infty}\frac{e^{-v^2}}{\sqrt{\frac{9}{4}+v^2}}\left|\frac{\Gamma\left(\frac{3}{2}+\frac{k-l+1}{2}+iv\right)\Gamma\left(\frac{3}{2}+\frac{k+l-1}{2}+iv\right)}{\Gamma\left(\frac{k-l+1}{2}\right)\Gamma\left(\frac{k+l-1}{2}\right)}\right|\\
&\hspace{.5in}\times\sum_{\eta\in\ringO_{F}^{\times+}}\eta^{-6-\lambda}\int_{-\infty}^{\infty}\left|\frac{\Gamma\left(\frac{k-7-\lambda}{2}-\frac{it}{2}\right)\Gamma\left(\frac{3+\lambda+l}{2}+\frac{it}{2}-iv\right)}{\Gamma\left(\frac{k+7+\lambda}{2}+\frac{it}{2}\right)\Gamma\left(\frac{-3-\lambda+l}{2}-\frac{it}{2}+iv\right)}\right|\;dtdv.
\end{align*}
Next, we observe that 
\begin{align*}
&\sum_{\eta\in\ringO_{F}^{\times+}}\eta^{-6-\lambda}\int_{-\infty}^{\infty}\left|\frac{\Gamma\left(\frac{k-7-\lambda}{2}-\frac{it}{2}\right)\Gamma\left(\frac{3+\lambda+l}{2}+\frac{it}{2}-iv\right)}{\Gamma\left(\frac{k+7+\lambda}{2}+\frac{it}{2}\right)\Gamma\left(\frac{-3-\lambda+l}{2}-\frac{it}{2}+iv\right)}\right|\;dt\\
&\hspace{.4in}=\sum_{\eta\in\ringO_{F}^{\times+}}\prod_{\substack{j=1\\ \eta_{j}>1}}^{n}\eta_{j}^{-\lambda_0}\int_{-\infty}^{\infty}\left|\frac{\Gamma\left(\frac{k_{j}-7-\lambda_0}{2}-\frac{it_{j}}{2}\right)\Gamma\left(\frac{3+\lambda_0+l_{j}}{2}+\frac{it_{j}}{2}-iv\right)}{\Gamma\left(\frac{k_{j}+7+\lambda_0}{2}+\frac{it_{j}}{2}\right)\Gamma\left(\frac{-3-\lambda_0+l_{j}}{2}-\frac{it_{j}}{2}+iv\right)}\right|\;dt_{j}\\
&\hspace{.9in}\times\prod_{\substack{j=1\\\eta_{j}\leq1}}^{n}\int_{-\infty}^{\infty}\left|\frac{\Gamma\left(\frac{k_{j}-7}{2}-\frac{it_{j}}{2}\right)\Gamma\left(\frac{3+l_j}{2}+\frac{it_{j}}{2}-iv\right)}{\Gamma\left(\frac{k_j+7}{2}+\frac{it_{j}}{2}\right)\Gamma\left(\frac{-3+l_j}{2}-\frac{it_{j}}{2}+iv\right)}\right|\;dt_{j}.\end{align*}
Let us now consider the integral 
\begin{align*}
I_{j}(v)&=\int_{-\infty}^{\infty}\left|\frac{\Gamma\left(\frac{k_{j}-7-\lambda_j}{2}-\frac{it_{j}}{2}\right)\Gamma\left(\frac{3+\lambda_j+l_{j}}{2}-i(v-\frac{t_{j}}{2})\right)}{\Gamma\left(\frac{k_{j}+7+\lambda_j}{2}+\frac{it_{j}}{2}\right)\Gamma\left(\frac{-3-\lambda_j+l_{j}}{2}+i(v-\frac{t_{j}}{2})\right)}\right|\;dt_{j}\\
&=\int_{-\infty}^{\infty}\left|\frac{\Gamma\left(\frac{k_{j}+7+\lambda_j}{2}-7-\lambda_j+\frac{it_{j}}{2}\right)\Gamma\left(\frac{-3-\lambda_j+l_{j}}{2}+3+\lambda_j+i(v-\frac{t_{j}}{2})\right)}{\Gamma\left(\frac{k_{j}+7+\lambda_j}{2}+\frac{it_{j}}{2}\right)\Gamma\left(\frac{-3-\lambda_j+l_{j}}{2}+i(v-\frac{t_{j}}{2})\right)}\right|\;dt_j.
\end{align*}
By Lemma~\ref{gamma-quotient}, we have
\begin{align*}
I_j(v)&\ll\int_{-\infty}^{\infty}\left|\frac{k_{j}+7+\lambda_j}{2}+\frac{it_{j}}{2}\right|^{-7-\lambda_j}\left|\frac{-3-\lambda_j+l_{j}}{2}+i\left(v-\frac{t_{j}}{2}\right)\right|^{3+\lambda_j}\;dt_j\\
&\ll\int_{-\infty}^{\infty}(k_{j}^{2}+t_{j}^{2})^{\frac{-7-\lambda_j}{2}}\left(l_j^2+\left(v-t_{j}\right)^2\right)^{\frac{3+\lambda_j}{2}}\;dt_j.
\end{align*}
Furthermore, the application of the change of variable, $t_{j}=k_{j}\tan(\theta_{j})$, yields \begin{align*}I_{j}(v)&\ll k_{j}^{-6-\lambda_j}\int_{-\frac{\pi}{2}}^{\frac{\pi}{2}}|\cos(\theta_{j})|^{5+\lambda_j}\left(v^2+k_{j}^2\tan^{2}(\theta_{j})\right)^{\frac{3+\lambda_j}{2}}\;d\theta_{j}\\&\ll k_{j}^{-6-\lambda_j}\left(v^2+k_{j}^2\right)^{\frac{3+\lambda_j}{2}}.\end{align*} 
Therefore, 
\begin{align*}
&\sum_{\eta\in\ringO_{F}^{\times+}}\eta^{-6-\lambda}\int_{-\infty}^{\infty}\left|\frac{\Gamma\left(\frac{k-7-\lambda}{2}-\frac{it}{2}\right)\Gamma\left(\frac{3+\lambda+l}{2}+\frac{it}{2}-iv\right)}{\Gamma\left(\frac{k+7+\lambda}{2}+\frac{it}{2}\right)\Gamma\left(\frac{-3-\lambda+l}{2}-\frac{it}{2}+iv\right)}\right|\;dt\\
&\hspace{.5in}\ll\sum_{\eta\in\ringO_{F}^{\times+}}\prod_{\substack{j=1\\\eta_{j}>1}}^{n}\eta_{j}^{-\lambda_0}k_{j}^{-6-\lambda_0}\left(v^2+k_{j}^2\right)^{\frac{3+\lambda_0}{2}}\prod_{\substack{j=1\\\eta_{j}\leq1}}^{n}k_{j}^{-6}\left(v^2+k_{j}^2\right)^{\frac{3}{2}}\\
&\hspace{.5in}\ll\sum_{\eta\in\ringO_{F}^{\times+}}\prod_{\substack{j=1\\ \eta_j>1}}^{n}\eta_{j}^{-\lambda_0}k_{j}^{-\lambda_0}\left(v^2+k_{j}^2\right)^{\frac{\lambda_0}{2}}\prod_{j=1}^{n}k_{j}^{-6}\left(v^2+k_{j}^2\right)^{\frac{3}{2}}\\
&\hspace{.5in}\ll|v|^{n\lambda_0}k^{-6}\left(v^2+k^2\right)^{\frac{3}{2}}.
\end{align*} 
Notice that the last inequality is guaranteed by (\ref{eqn:luo}). Finally, since 
\[\left|\frac{\Gamma\left(\frac{3}{2}+\frac{\bk-{\bl}+1}{2}+iv\right)\Gamma\left(\frac{3}{2}+\frac{\bk+\bl-1}{2}+iv\right)}{\Gamma\left(\frac{\bk-\bl+1}{2}\right)\Gamma\left(\frac{\bk+\bl-1}{2}\right)}\right|\ll\left(k^{2}+v^2\right)^{\frac{3}{2}},\] 
we conclude that
\begin{align*}
E_{\g,\p,\a}(k)&\ll\int_{-\infty}^{\infty}\frac{|v|^{n\lambda_0}e^{-v^2}}{\sqrt{\frac{9}{4}+v^2}}\left(k^{2}+v^2\right)^{3}k^{-6}\;dv\\
&\ll\int_{0}^{\infty}\frac{v^{n(\lambda_0+6)}e^{-v^2}}{\sqrt{\frac{9}{4}+v^2}}\;dv\\
&\ll1.\end{align*}
This proves the second statement of Lemma~\ref{lem:main}.

\section*{Acknowledgements}
The authors would like to thank Amir Akbary for useful remarks which improved the exposition of the manuscript. The authors would also like to extend their gratitude to Jeff Hoffstein for valuable discussions about the topic of this paper.

\bibliographystyle{siam}
\bibliography{references}

\begin{thebibliography}{10}

\bibitem{blasius}
{\sc D.~Blasius}, {\em Hilbert modular forms and the {R}amanujan conjecture},
  in Noncommutative geometry and number theory, vol.~E37 of Aspects Math.,
  Wiesbaden, 2006, Vieweg, pp.~35--56.

\bibitem{bump}
{\sc D.~Bump}, {\em Automorphic forms and representations}, vol.~55 of
  Cambridge Studies in Advanced Mathematics, Cambridge University Press,
  Cambridge, 1997.

\bibitem{ganguly-hoffstein-sengupta}
{\sc S.~Ganguly, J.~Hoffstein, and J.~Sengupta}, {\em Determining modular forms
  on {$\SL_2(\mathbb{Z})$} by central values of convolution {$L$}-functions},
  Math. Ann., 345 (2009), pp.~843--857.

\bibitem{garrett}
{\sc P.~Garrett}, {\em Holomorphic {H}ilbert modular forms}, The Wadsworth \&
  Brooks/Cole Mathematics Series, Wadsworth \& Brooks/Cole Advanced Books \&
  Software, Pacific Grove, CA, 1990.

\bibitem{goldfeld}
{\sc D.~Goldfeld}, {\em Analytic and arithmetic theory of {P}oincar\'e series},
  in Journ\'ees {A}rithm\'etiques de {L}uminy ({C}olloq. {I}nternat. {CNRS},
  {C}entre {U}niv. {L}uminy, {L}uminy, 1978), vol.~61 of Ast\'erisque, Soc.
  Math. France, Paris, 1979, pp.~95--107.

\bibitem{2016TRANS}
{\sc A.~Hamieh and N.~Tanabe}, {\em Determining {H}ilbert modular forms by
  central values of {R}ankin-{S}elberg convolutions: {t}he level aspect}, To
  appear in Trans. Amer. Math. Soc.

\bibitem{iwaniec-kowalski}
{\sc H.~Iwaniec and E.~Kowalski}, {\em Analytic Number Theory}, vol.~53 of
  American Mathematical Society Colloquium Publications, American Mathematical
  Society, Providence, 2004.

\bibitem{luo2}
{\sc W.~Luo}, {\em Special {$L$}-values of {R}ankin-{S}elberg convolutions},
  Math. Ann., 314 (1999), pp.~591--600.

\bibitem{luo1}
\leavevmode\vrule height 2pt depth -1.6pt width 23pt, {\em Poincar\'e series
  and {H}ilbert modular forms}, Ramanujan J., 7 (2003), pp.~129--140.

\bibitem{luo-ramakrishnan}
{\sc W.~Luo and D.~Ramakrishnan}, {\em Determination of modular forms by twists
  of critical {$L$}-values}, Invent. Math., 130 (1997), pp.~371--398.

\bibitem{miyake}
{\sc T.~Miyake}, {\em On automorphic forms on $\rm{{GL}}_{2}$ and {H}ecke
  operators}, Ann. of Math. (2), 94 (1971), pp.~174--189.

\bibitem{Pi}
{\sc Q.~H. Pi}, {\em Determination of cusp forms by central values of
  {R}ankin-{S}elberg {$L$}-functions}, J. Number Theory, 130 (2010),
  pp.~2283--2292.

\bibitem{prasad-ramakrishnan}
{\sc D.~Prasad and D.~Ramakrishnan}, {\em On the global root numbers of
  $\mathrm{GL}(n) \times \mathrm{GL}(m)$}, Proceedings of Symposia in Pure
  Maths of the AMS, 66 (1999), pp.~311--330.

\bibitem{JRMS-2011}
{\sc A.~Raghuram and N.~Tanabe}, {\em Notes on the arithmetic of {H}ilbert
  modular forms}, J. Ramanujan Math. Soc., 26 (2011), pp.~261--319.

\bibitem{shimura-duke}
{\sc G.~Shimura}, {\em The special values of the zeta functions associated with
  {H}ilbert modular forms}, Duke Math. J., 45 (1978), pp.~637--679.

\bibitem{stark}
{\sc H.~M. Stark}, {\em On the determination of an {$L$}-function from one
  value}, in Analytic Number Theory, {V}ol. \ 2 ({A}llerton {P}ark, {IL},
  1995), vol.~139 of Progr. Math., Birkh\"auser Boston, Boston, MA, 1996,
  pp.~737--743.

\bibitem{trotabas}
{\sc D.~Trotabas}, {\em Non annulation des fonctions {$L$} des formes
  modulaires de {H}ilbert au point central}, Ann. Inst. Fourier, Grenoble, 61
  (2011), pp.~187--259.

\bibitem{zhang}
{\sc Y.~Zhang}, {\em Determining modular forms of general level by central
  values of convolution {$L$}-functions}, Acta Arith., 150 (2011), pp.~93--103.

\end{thebibliography}

\end{document}